\documentclass[12pt,reqno]{amsart}
\usepackage{amsthm}
\usepackage{amssymb}
\usepackage{amsmath}
\usepackage{mathrsfs}
\usepackage{amsfonts}
\usepackage{amssymb,amsmath}
 \usepackage[colorlinks, linkcolor=black, citecolor=blue,   hypertexnames=false]{hyperref}
 \usepackage[numbers,sort&compress]{natbib}
 \usepackage{fullpage}

\numberwithin{equation}{section}
\newtheorem{thm}{Theorem}[section]

\newtheorem{lem}[thm]{Lemma}
\newtheorem{cor}[thm]{Corollary}
\newtheorem{rmk}[thm]{Remark}

\allowdisplaybreaks[4]

\newcommand{\al}{\alpha}
 
 \newcommand{\ld}{\lambda}
 
 \newcommand{\de}{\delta}
 \newcommand{\De}{\Delta}
 \newcommand{\ep}{\varepsilon}

 \newcommand{\ka}{\kappa}

 \newcommand{\R}{\mathbb{R}}

 \newcommand{\p}{\partial}
 \newcommand{\n}{\nabla}

\def\XXint#1#2#3{{\setbox0=\hbox{$#1{#2#3}{\int}$}
     \vcenter{\hbox{$#2#3$}}\kern-.5\wd0}}

 \def\<{\langle} \def\>{\rangle}
 \def\({\left(} \def\){\right)}

\title{Gradient estimate and Liouville theorem for a semilinear parabolic equation with variable coefficient}

\author[C. Song]{Chong Song}
\address{School of Mathematical Sciences, Xiamen University
\newline\indent
Xiamen, Fujian 361005, P.R. China
}
\email{songchong@xmu.edu.cn}

\author[J. Wu]{Jibo Wu}
\address{School of Mathematical Sciences, Xiamen University
\newline\indent
Xiamen, Fujian 361005, P.R. China
}
\email{wujibo@stu.xmu.edu.cn}

\date{August 12, 2026}

\keywords{semilinear parabolic equation, gradient estimate, Liouville theorem, Nash-Moser iteration}

\begin{document}

\begin{abstract}
In this paper, we investigate the semilinear parabolic equation 
\[ \partial_tf-\Delta f=a(x,t)f^p \]
on a complete Riemannian manifold with Ricci curvature bounded from below. By means of Nash-Moser iteration, we establish Li–Yau-type gradient estimates for positive solutions to this equation, where the coefficient function  $a$ can be either strictly sign-definite or sign-changing. As an application, we derive Liouville theorems for ancient and eternal solutions on manifolds with nonnegative Ricci curvature, generalizing a number of classical results. Our proof features the incorporation and tuning of two parameters in the auxiliary quantities to accommodate the variable coefficient $a$ and to extend the admissible range of $p$.
\end{abstract}
\maketitle

\section{Introduction}
Let $(M,g)$ be an $n$-dimensional complete Riemannian manifold with Ricci curvature satisfying
\begin{equation}\label{e:ric-condition}
    \mathrm{Ric}\geq -(n-1)\kappa g,
\end{equation}
where $\kappa\geq 0$ is a constant. In this paper, we study the semilinear parabolic equation
\begin{equation}\label{e:main}
\partial_t f-\Delta f=a(x, t)f^p,
\end{equation}
where $p\in \mathbb{R}$ and the function $a(x, t)$ is $C^2$ in the first variable and $C^1$ in the second variable. Our goal is to establish a Li-Yau type local gradient estimate and Liouville theorem for positive solutions of \eqref{e:main}, where the coefficient $a(x, t)$ is allowed to change sign.

Let us first recall some gradient estimate results and Liouville theorems related to equation~\eqref{e:main}. When $p=1$, equation~\eqref{e:main} becomes the linear equation
\begin{equation}\label{e:linear}
\partial_t f-\Delta f= a(x,t)f.  
\end{equation}
In the celebrated work \cite{LY86}, Li and Yau established the differential Harnack inequality for positive solutions of \eqref{e:linear}, and studied its fundamental solution in some special situations, including the heat equation, which yields many applications. Important variants of gradient estimates for the heat equation were later obtained by Hamilton~\cite{Hamilton1993} and Souplet-Zhang~\cite{SZ06}. In particular, when $\kappa=0$, Souplet-Zhang's gradient estimate implies a Liouville theorem for positive eternal solutions to the heat equation under certain growth conditions.

When $a\equiv 1$ and $M=\R^n$ is the Euclidean space, the equation
\begin{equation}\label{eq:Fujita}
\partial_t f-\Delta f=f^p
\end{equation}
was first studied by Fujita~\cite{F66}. In particular, Fujita~\cite{F66} proved that for
\[ 1<p<p_F:=1+\frac{2}{n},\]
any positive solution to equation \eqref{eq:Fujita} blows-up in finite time. It follows that equation \eqref{eq:Fujita} admits no positive global solutions on $M\times[0,+\infty)$. The same nonexistence result also holds for the critical exponent $p=p_F$~\cite{H73,KST77}. Since then, the blow-up phenomenon, qualitative behavior and nonexistence of ancient, global or eternal solutions to \eqref{eq:Fujita} on Euclidean spaces has been extensively studied. See, for example, \cite{GK1985, GK1987, GK1989, BV1998, Levine1990, Q16, Q21, QS07, PQS07, WWW2025, MZ2026}.

When $M$ is a Riemannian manifold, Zhang~\cite{Zhang1998, Zhang1999} generalized Fujita's result to the more general case where $a=a(x)$ is variable and positive. In fact, he derived the nonexistence of global positive solutions under certain assumptions both on the manifold and the function $a(x)$. This was further generalized by Castorina-Mantegazza-Sciunzi \cite{CMS19} to equation~\eqref{e:main} where the function $a=a(x,t)>0$ can be time-dependent and satisfies some growth conditions. 

It is well known from a classical result of Gidas-Spruck~\cite{GS1981} that the elliptic equation
\[ -\Delta f = f^p, \]
admits no positive solutions on Riemannian manifolds with nonnegative Ricci curvature for $1 < p < p_S$, where
\[ p_S(n)=+\infty \text{ for } n=1, 2, \quad p_S(n):=\frac{n+2}{n-2}\ \text{ for } n\geq 3.\] 
It is natural to expect the parabolic analog to hold, i.e. equation \eqref{eq:Fujita} admits no positive eternal solution for $1<p<p_S$. On Euclidean spaces, this conjecture was settled by Quittner \cite{Q16, Q21}. However, on general Riemannian manifolds with nonnegative Ricci curvature, the Liouville theorem for all subcritical exponents $1 < p < p_S$ remains unsolved.

One promising way to tackle the above conjecture is to establish Li-Yau or Souplet-Zhang type gradient estimates. There has been much progress along this direction in recent years, see for example \cite{CCK15, CM2021, CCM23, CDM26, LZ25}. In particular, the very recent work of Chen, Du and Ma \cite{CDM26} proved a Li-Yau type gradient estimate for positive solutions to \eqref{eq:Fujita} on a Riemannian manifold with Ricci curvature satisfying (\ref{e:ric-condition}) and $1<p<p_D$, where
\begin{equation}\label{e:p-d}
  p_D=8\,\,\,\text{for}\,\, n=1,\quad p_D=\frac{3+\sqrt{17}}{2}\,\,\,\text{for}\,\, n=2, \quad p_D=\frac{n+2+\sqrt{n^2+8n}}{2(n-1)}\,\,\,\text{for}\,\, n\geq3.
\end{equation}
More precisely, they showed that for any positive eternal solutions $f$ to \eqref{eq:Fujita} with $1<p<p_D$,
there exist positive constants $\beta$, $\gamma$ and $C_0$ depending only on $n$ and $p$, such that
\begin{equation}\label{e:du}
    \frac{|\nabla f|^2}{f^2}-\gamma\frac{f_t}{f} + \beta f^{p-1} \leq C_0\kappa.
\end{equation}
Estimate \eqref{e:du} directly implies a Liouville theorem for positive eternal solutions to equation \eqref{eq:Fujita} when $\kappa=0$. To the best of our knowledge, \eqref{e:p-d} seems to be the largest range of the exponent $p$ for Li-Yau type gradient estimates in the existing literature for equation \eqref{eq:Fujita}.

For the equation~\eqref{e:main} with variable coefficient $a=a(x,t)$, few results on Li-Yau type gradient estimates exist in the literature. In 1991, J. Li~\cite{L1991} established a Li-Yau inequality for $0 <p< \frac{n}{n-1}$ and $n\ge 4$. More precisely, if $a(x,t)$ is nonnegative and satisfies
\[ \p_t a + \Delta a \ge 0,\]
then a positive solution to \eqref{e:main} satisfies
\begin{equation}\label{e:li1991}
    p\frac{|\n f|^2}{f^2} -\frac{f_t}{f} + a(x,t)f^{p-1} \le C(n,p)\left(\frac{1}{t} + \ka\right).
\end{equation} 
Here we also mention some related results in the elliptic case with variable coefficients. In~\cite{GS1981}, Gidas-Spruck actually established a Liouville theorem on Ricci nonnegative manifolds for the equation
\begin{equation}\label{e:GS1981-a}
      \Delta f + a(x)f^p = 0,
\end{equation}
where $1\le p<p_S$ and $a(x)\ge 0$ satisfies $\Delta a\ge 0$ and certain growth conditions. Li~\cite{L1991} showed that for $n\ge 4$ and $1<p<\frac{n}{n-2}$, there is no positive solution to \eqref{e:GS1981-a}, only assuming $a(x)$ satisfies $\Delta a(x)\ge 0$ and $\sup_M a>0$.   

We remark that Cheng-Yau type gradient estimates and Liouville theorems have been obtained for many elliptic equations with more general nonlinear terms with variable coefficients. For example, Wang and Wang ~\cite{WjW24, WjW24*} established interior estimates of the equation
\begin{equation}\label{eq-Wang}
\Delta f+a(x)f\ln f+b(x)f=0,
\end{equation}
where $a(x)$ is either positive or negative. Recently, the authors of the current paper~\cite{SW25} obtained gradient estimates for the equation: 
\begin{equation}\label{e:song-wumain}
\Delta f+a(x)f^p(\ln(f+c))^q=0.
\end{equation}

\

In this paper, we derive Li-Yau type gradient estimates and Liouville theorems for the equation \eqref{e:main}, where $a(x,t)$ is variable and the underlying manifold satisfies Ricci lower bound \eqref{e:ric-condition}. 

Let $P_{R,T}=B(o,R)\times (t_0-T,t_0]\subset M\times \R$ be a parabolic cylinder of radius $R>0$ and $T>0$. Suppose $a(x,t)$ is  not identically zero  on $P_{R,T}$, which is $C^2$ in the first variable and $C^1$ in the second variable. Our main results can be divided into two cases: $a(x,t)$ is strictly sign-definite or allowed to change sign. 

Our first result extends the gradient estimate of Chen-Du-Ma~\cite{CDM26} to \eqref{e:main} with variable coefficients that are either strictly positive or strictly negative.

\begin{thm} \label{thm:main1}
Suppose $(M,g)$ is an n-dimensional complete Riemannian manifold satisfying \eqref{e:ric-condition} and $n\geq 3$.  
Let $p_D$ be defined by \eqref{e:p-d}. Suppose that $p$ satisfies
\begin{itemize}
\item [(P1)] $p<p_D$, if $a(x, t)$ is positive on $P_{R,T}$;  
\item [(P2)] $p>1$, if $a(x, t)$ is negative on $P_{R,T}$,
\end{itemize}
and $a$ satisfies
\begin{itemize}
\item[(A1)] There exists $K\geq0$ such that
\begin{equation}\label{assumption-a-1}
\left|\frac{\p_t a}{a}\right| + \left|\frac{\Delta a}{a}\right|+\left|\frac{\nabla a}{a}\right|^2\leq K,
\text{ on } P_{R,T}.
\end{equation}
\end{itemize}
Then there exist constants $\lambda\in(0,1)$ and $\de_0>0$, only depending on $n$ and $p$, such that any positive solution $f$ to equation \eqref{e:main} on $P_{R,T}$ satisfies
\begin{equation}\label{ineq:zhuyao1}
\lambda\frac{|\nabla f|^2}{f^2}-\frac{f_t}{f}+\de_0 |a(x,t)|f^{p-1}\leq C(n,p,\lambda,\de_0)\left(\frac{1}{T}+\frac{1}{R^2}+\ka+K\right)
\end{equation}
on $P_{R/2, T/2}$, where $C(n, p,\lambda,\de_0)$ is a positive constant depending on $n$, $p$, $\lambda$ and $\de_0$.
\end{thm}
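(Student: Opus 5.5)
We follow the Nash--Moser approach of Chen--Du--Ma~\cite{CDM26}, adapted to the variable coefficient. Set $w=\ln f$, so that
\[ \partial_t w-\Delta w=|\n w|^2+a e^{(p-1)w}, \]
and introduce the auxiliary quantity
\[
F=\lambda|\n w|^2-\partial_t w+\de_0 |a| e^{(p-1)w}.
\]
Here $\lambda\in(0,1)$ and $\de_0>0$ are the two parameters to be tuned. Writing $\sigma=\mathrm{sign}(a)$ (constant on $P_{R,T}$), we have $|a|=\sigma a$. We will compute $(\Delta-\partial_t)F$ and aim for a differential inequality of the form
\begin{equation*}
(\Delta-\partial_t)F+2\<\n w,\n F\>\ \ge\ \frac{c_1}{n}F^2-C_2\big(\ka+K\big)\big(|\n w|^2+|a|e^{(p-1)w}\big)-C_3(\ka+K)^2
\end{equation*}
on the set where $F>0$, with constants depending only on $n$ and $p$. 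The derivation uses Bochner's formula with $\mathrm{Ric}\ge-(n-1)\ka$, the refined inequality $|\n^2 w|^2\ge \frac1n(\Delta w)^2$, and the identity $\Delta w=-|\n w|^2-a e^{(p-1)w}+\partial_t w$.

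The terms involving derivatives of $a$ are $\partial_t a$, $\Delta a$ and $\<\n a,\n w\>$, each multiplied by $e^{(p-1)w}$. Writing them as $\frac{\partial_t a}{a}\,|a|e^{(p-1)w}$ and so on, assumption (A1) together with Cauchy--Schwarz, e.g.
\[ |\<\n a,\n w\>|\le \ep |a||\n w|^2+\ep^{-1}K|a|, \]
bounds them by $CK\,|a|e^{(p-1)w}+\ep|a|e^{(p-1)w}|\n w|^2$. The last term is absorbed into the good cross term $|a|e^{(p-1)w}|\n w|^2$ produced by the algebra. What remains is an algebraic positivity problem in the three quantities $y=|\n w|^2$, $z=\partial_t w$, $u=|a|e^{(p-1)w}$. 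One must show that the quadratic form arising from $\frac{2\lambda}{n}(\Delta w)^2$ plus the nonlinear terms, namely $\de_0(p-1)^2 u y$ and $(p-1)u$ times the bracket, dominates $cF^2$.

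\textbf{Key step (main obstacle): the choice of $(\lambda,\de_0)$ making this quadratic form positive definite.}
\begin{itemize}
\item In case (P1), $\sigma=+1$, the computation reduces essentially to the constant-coefficient case, and we follow~\cite{CDM26}. Positivity holds exactly when $p<p_D$, for suitable $\lambda$ near a critical value and small $\de_0$. The range $p\le1$ should be handled by the same discriminant computation, where the nonlinear term has a favorable sign.
\item In case (P2), $\sigma=-1$, the term $a e^{(p-1)w}$ enters with the opposite sign. For $p>1$ we expect the cross terms to be favorable for every $\lambda<1$ with $\de_0$ small, so no upper bound on $p$ is needed.
\end{itemize}
I would first try to reduce positivity to a discriminant condition in $\lambda$ as in~\cite{CDM26}, treating the extra $K$ terms as lower order.

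Given the differential inequality, we run the Moser iteration on $F_+$. Multiply by $\eta^2F_+^{q}$, where $\eta$ is a space-time cutoff supported in $P_{R,T}$ with $|\n\eta|^2\le C\eta/R^2$ and $|\partial_t\eta|+|\Delta\eta|\le C(\frac1T+\frac1{R^2}+\sqrt\ka/R)$; the Laplacian comparison under \eqref{e:ric-condition} is used to control $\Delta\eta$. Integrate by parts and use the local Sobolev inequality of Saloff-Coste on $B(o,R)$, whose constants depend on $n$ and $\ka R^2$. This yields an $L^{q}\to L^{\infty}$ iteration after absorbing the gradient term $\<\n w,\n F\>$. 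That absorption is done via $|\n w|^2\le \lambda^{-1}(F+\partial_t w)$, or more simply by a large-$q$ absorption with $|\n w|\le C F^{1/2}+C(\ka+K)^{1/2}$ on $\{F>0\}$.

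An initial $L^{q_0}$ bound comes from the $F^2$ term: integrating $F^{q_0+1}\eta^2$ against $\frac{c_1}{n}F^2$ gives $\|F_+\|_{L^{q_0}}\lesssim |P_{R,T}|^{1/q_0}\big(\frac1T+\frac1{R^2}+\ka+K\big)$. Iterating and scaling then gives
\[ \sup_{P_{R/2,T/2}}F\le C\Big(\frac1T+\frac1{R^2}+\ka+K\Big), \]
which is \eqref{ineq:zhuyao1}.
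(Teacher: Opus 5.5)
\textbf{There is a genuine gap: your target pointwise inequality is too weak to close the iteration.} Your target keeps only the good term $\frac{c_1}{n}F^2$ and leaves the errors $-C_2(\kappa+K)\big(|\nabla w|^2+|a|e^{(p-1)w}\big)$. To absorb the drift $2\langle\nabla w,\nabla F\rangle$ you then invoke $|\nabla w|\le CF^{1/2}+C(\kappa+K)^{1/2}$ on $\{F>0\}$, and that bound is false. Because $F$ contains $-\partial_t w$, $F$ can be close to $0^+$ at points where $|\nabla w|$ is large. The alternative $|\nabla w|^2\le\lambda^{-1}(F+\partial_t w)$ is useless, since $\partial_t w$ is not controlled. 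For the same reason neither $|\nabla w|^2$ nor $|a|e^{(p-1)w}$ is dominated by $F$. So after testing with $\eta^2F_+^q$, three terms have nothing to be absorbed by: $(\kappa+K)\int\eta^2F^q|\nabla w|^2$, $K\int\eta^2F^q|a|e^{(p-1)w}$, and the piece $\int\eta^2F^{q+1}|\nabla w|^2$ that Young's inequality produces from the drift.

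The missing ingredient is a stronger algebraic estimate. With $X=F$, $Y=|\nabla w|^2$, $Z=ae^{(p-1)w}$, you need
\[
H(X,Y,Z)\le-\varepsilon_1X^2-\varepsilon_2XY-\varepsilon_3Z^2,
\]
not merely $-cX^2$. Each extra term then does a specific job.
\begin{itemize}
\item The $Z^2$ term absorbs all $a$-derivative terms via (A1). For example, writing $\delta=\sigma\delta_0$, one has $2|\lambda-\delta p|e^{(p-1)w}|\langle\nabla w,\nabla a\rangle|\le\frac{\varepsilon_3}{2}Z^2+\frac{2(\lambda-\delta p)^2}{\varepsilon_3}K|\nabla w|^2$.
\item The $XY$ term absorbs $\big(2\lambda(n-1)\kappa+CK\big)|\nabla w|^2$ after the shift $u=F-C(\kappa+K)$. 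This gives $(\partial_t-\Delta)u\le-\varepsilon_1u^2-\varepsilon_2|\nabla w|^2u+2\langle\nabla w,\nabla u\rangle$ on $\{u>0\}$.
\item In the Moser step, split $2u^{2\beta-1}\langle\nabla w,\nabla u\rangle\le\varepsilon_2u^{2\beta}|\nabla w|^2+\varepsilon_2^{-1}u^{2\beta-2}|\nabla u|^2$. The first piece is absorbed by the $\varepsilon_2|\nabla w|^2u$ term. The second is absorbed, for $\beta$ large, by the $(2\beta-1)u^{2\beta-2}|\nabla u|^2$ term from integration by parts.
\end{itemize}

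\textbf{The key algebraic step is only asserted, and your heuristic for (P1) cannot reach $p_D$.} Test $Y=0$, $X=(1-\delta)Z$, and then $X\to0^+$, $(1-\lambda)Y=(1-\delta)Z$. Nonpositivity of $H$ on $\{X,Y,Z\ge0\}$ forces
\[
p-1\le\tfrac{8\lambda(1-\delta)}{n}\quad\text{and}\quad(p-1)(\lambda-\delta p)\le\tfrac{8\lambda(1-\lambda)(1-\delta)}{n}.
\]
As $\delta\to0$ these leave only $p\le1+\frac4n+o(1)$, while $1+\frac4n<p_D$. So the parameters must be coupled, with $\delta$ of order one. Write $W=X+(1-\lambda)Y$, so that
\[
H=-\tfrac{2\lambda}{n}\big(W+(1-\delta)Z\big)^2+(p-1)WZ+(p-1)(2\lambda-\delta p-1)YZ.
\]
Choose $\delta=\frac{2\lambda-1}{p}$ to kill the $YZ$ term, and then $\lambda=\frac{p+1}{4}$. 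The remaining form in $(W,Z)$ is negative exactly when $(n-1)p^2-(n+2)p-1<0$, i.e. $p<p_D$. A small leftover multiple of $-(W+(1-\delta)Z)^2$, whose cross terms are all nonnegative since $W,Z\ge0$, supplies the $XY$ and $Z^2$ terms.

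Your claim in (P2) is fine. Write $H=-\frac{2\lambda}{n}\big(X+(1-\lambda)Y+(1-\delta)Z\big)^2+(p-1)XZ+(p-1)(\lambda-\delta p)YZ$. For $Z\le0$, $p>1$ and $\delta<0$, each of the three pieces is nonpositive, and they vanish simultaneously only at the origin. Homogeneity then gives the strict bound for every $\lambda\in(0,1)$.
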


\begin{rmk}\label{rmk:1}
    Under assumption $(A1)$, since $|\n a|\leq \sqrt{K}|a|$, the function $a$ in fact satisfies a Harnack inequality, which implies that $a$ must be strictly sign-definite.
\end{rmk}

When $a$ is allowed to change sign (but not necessarily sign-changing), we have the following Li-Yau type gradient estimate for $n\ge 4$ which extends the result of Li~\cite{L1991} to $1<p<\frac{n}{n-2}$.

To state our results, we define the operator:
\[
    L_\lambda :=\De+(\frac{p}{\lambda}-1)\partial_t,
\]
where $\lambda$ is to be determined.
\begin{thm} \label{thm:main2}
Suppose $(M,g)$ is an n-dimensional complete Riemannian manifold satisfying \eqref{e:ric-condition} and $n\ge 4$.
Suppose $p$ satisfies
\begin{itemize} 
\item[(P3)] $1<p<\frac{n}{n-2}$,
\end{itemize}
and $a$ satisfies either of the following assumptions:
\begin{itemize}
\item[(A2)] $L_{\frac{p}{2}}a=( \Delta+\p_t)a\ge 0$ on $P_{R,T}$;
\item[(A3)] there exists $K\geq 0$ such that $\left|(\p_t + \Delta)a\right|\leq K|a|$ on 
$P_{R, T}$.
\end{itemize}
Then any positive solution $f$ to equation \eqref{e:main} on $P_{R,T}$ satisfies
\begin{equation}\label{ineq:zhuyao2}
\frac{p}{2}\frac{|\nabla f|^2}{f^2}-\frac{f_t}{f}+\frac{1}{2} a(x,t)f^{p-1}\leq C(n,p)\left(\frac{1}{T}+\frac{1}{R^2}+\ka+K\right)
\end{equation}
on $P_{R/2, T/2}$, where $K=0$ in case $(A2)$, and $C(n, p)$ is a positive constant depending on $n$ and $p$.
\end{thm}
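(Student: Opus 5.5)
Write $u=\ln f$, so that $u$ solves $\partial_t u-\Delta u=|\n u|^2+a e^{(p-1)u}$. Set $w:=e^{(p-1)u}=f^{p-1}$ and consider the Li--Yau type quantity
\[
F:=\frac{p}{2}|\n u|^2-u_t+\frac12 a w ,
\]
or more generally $F_\lambda=\lambda|\n u|^2-u_t+\delta a w$, specializing at the end to $\lambda=p/2$ and $\delta=1/2$. The plan is to derive a differential inequality of the form
\[
(\Delta-\partial_t)F+2\langle \n u,\n F\rangle \ \ge\ \frac{2\epsilon}{n}F^2- C(\ka+K)\,F \cdot(\text{lower order})-\ldots
\]
and then close the estimate, either by a Li--Yau maximum principle argument with a cutoff or by Nash--Moser iteration as the abstract indicates. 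For the maximum principle route I would use the cutoff $\psi(x,t)=\bar\psi(r(x)/R)\,\eta(t)$, with the Laplacian comparison $\Delta r\le (n-1)\sqrt{\ka}\coth(\sqrt\ka r)$ coming from \eqref{e:ric-condition}. Applying the maximum principle to $\psi F$ then gives $\psi F\le C(\frac1T+\frac1{R^2}+\ka+K)$ on $P_{R/2,T/2}$.

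The core is the Bochner computation.
\begin{itemize}
\item Differentiating gives $u_t$ terms together with $\Delta|\n u|^2\ge \frac2n(\Delta u)^2+2\langle\n u,\n\Delta u\rangle-2(n-1)\ka|\n u|^2$.
\item The nonlinear term contributes $(\Delta-\partial_t)(aw)$, which contains $w(\Delta+\partial_t)a$ only after the $\partial_t$ terms are combined. This is exactly where the choice $\lambda=p/2$ enters: the coefficient of $\partial_t a$ produced by $-u_t$ and by $\delta aw$ reorganizes into $L_{p/2}a=(\Delta+\partial_t)a$. I would verify this by tracking the $\partial_t a$ contributions; presumably the operator $L_\lambda$ was defined precisely so that this combination appears.
\item Under (A2) this term has a favorable sign. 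Under (A3) it is bounded by $K|a|w$, which is absorbed into $F$ and $aw$ terms at a cost of $CK$.
\item Cross terms of the form $w\langle\n a,\n u\rangle$ must cancel or be absorbed. Since no bound on $|\n a|$ is assumed, they have to cancel exactly, and I would expect that to happen through the gradient term $2\langle\n u,\n F\rangle$ together with the coefficient choices.
\end{itemize}

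Next I would express $\Delta u=-|\n u|^2-aw+u_t$ in terms of $F$, $y:=|\n u|^2$ and $z:=aw$. The term $\frac2n(\Delta u)^2$, combined with the remaining terms $(p-1)z\,(\ldots)$ generated by differentiating $w$, gives a quadratic form in $(y,z)$, or equivalently in $(F,y,z)$. The requirement is that this form dominate $cF^2$ whenever $F>0$, with no sign condition on $z$, since $a$ changes sign.

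I expect this positivity to be the main obstacle. It is where $1<p<\frac n{n-2}$ and $n\ge4$ must enter, much as in Li \cite{L1991}, where the constraint on $p$ comes from a discriminant condition. I would first try writing the form as $\frac2n(F+\alpha y+\beta z)^2$ plus a remainder, and check that the remainder is nonnegative exactly when $p<\frac n{n-2}$. If that fails, I would keep a small multiple $\epsilon$ of the Bochner term and use Cauchy--Schwarz on the cross term $yz$.

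Once the inequality $\mathcal L F\ge cF^2-C(\ka+K)F-\ldots$ is established, the cutoff argument is standard. At a maximum point of $\psi F$ I would bound $|\n\psi|^2/\psi$, $-\Delta\psi$ and $\partial_t\psi$ by $C(R^{-2}+\sqrt\ka R^{-1}+T^{-1})$. The term $\langle\n u,\n\psi\rangle F$ needs care: I would control it by $|\n u|^2\le \frac2p(F+u_t-\frac12 z)$ together with Young's inequality, which is the usual step where the $\frac p2$ coefficient on the gradient is used. This completes the proof of \eqref{ineq:zhuyao2}.
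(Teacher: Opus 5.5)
\textbf{Gap 1: the quadratic-form step is missing, and its target is too weak.} Your quantity is the right one: $\lambda=\frac p2$, $\delta=\frac12$. It is $\lambda=p\delta$ that kills the $w\langle\nabla u,\nabla a\rangle$ terms, whose coefficient is $2(\lambda-\delta p)$; $\delta=\frac12$ alone produces $-\delta\Delta a+(\delta-1)\partial_t a=-\frac12(\Delta+\partial_t)a$. But the step that carries the whole theorem is only announced. After Bochner and $|\nabla^2u|^2\ge\frac1n(\Delta u)^2$ one gets
\[
(\Delta-\partial_t)F+2\langle\nabla u,\nabla F\rangle\ \ge\ \frac pn U^2-(p-1)Fz-p(n-1)\kappa y+\tfrac12 w(\Delta+\partial_t)a,
\qquad U=-\Delta u=F+\bigl(1-\tfrac p2\bigr)y+\tfrac12 z .
\]
The remainder $-(p-1)Fz$ left by the obvious square has no sign. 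What works is to eliminate $z=2(U-F-(1-\frac p2)y)$ and minimize in $U$:
\[
\frac pnU^2-(p-1)Fz\ \ge\ \frac{(p-1)(n-(n-2)p)}{p}F^2+(p-1)(2-p)Fy .
\]
This is where $p<\frac n{n-2}$ enters, and $n\ge4$ enters through $p<2$ (i.e.\ $\lambda<1$) in the second coefficient. The $Fy$ term is indispensable. It is the only term that absorbs $p(n-1)\kappa y$, and in your cutoff argument the only one that absorbs $\tilde F\langle\nabla u,\nabla\psi\rangle$. Your proposed substitute, $|\nabla u|^2=\frac2p(F+u_t-\frac12aw)$, is just the definition of $F$. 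It trades $|\nabla u|^2$ for $u_t$ and $aw$, neither of which is controlled at the maximum point.

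\textbf{Gap 2: the (A3) case, which is the genuinely missing idea.} There the bad term is $\frac12K|a|w=\frac12K|z|$, and a lower bound $c_1F^2+c_2Fy$ contains nothing that controls $|z|$. Pointwise, $F$, $y$, $z$ are independent, since $u_t$ is free. When $a$ changes sign the form also admits no lower bound $cz^2$. So your step \emph{absorbed into $F$ and $aw$ terms at a cost of $CK$} has nothing to absorb into. The paper uses the slack in the strict inequality $p<\frac n{n-2}$:
\begin{itemize}
\item[(i)] Write $(p-1)Fz=(p-1+\theta)Fz-\theta Fz$ and run the elimination on the first piece only. For $|\theta|<\varepsilon_3$ both coefficients stay positive, and the extra term $\theta Fz$ remains on the good side.
\item[(ii)] Choose $\theta=\frac{\varepsilon_3}{2}\,\mathrm{sgn}(a)$ at each point. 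This turns the extra term into $\frac{\varepsilon_3}2F|aw|$, which beats $\frac12K|a|w$ once $F\ge K/\varepsilon_3$.
\item[(iii)] Take $\varepsilon_1,\varepsilon_2$ to be the minima of the two coefficients over $\theta\in\{0,\pm\frac{\varepsilon_3}2\}$, and set $\tilde F=F-\frac{p(n-1)}{\varepsilon_2}\kappa-\frac{K}{\varepsilon_3}$. This gives $(\partial_t-\Delta)\tilde F\le-\varepsilon_1\tilde F^2-\varepsilon_2|\nabla u|^2\tilde F+2\langle\nabla u,\nabla\tilde F\rangle$ wherever $\tilde F>0$.
\end{itemize}
From there your cutoff maximum-principle argument is a legitimate, more elementary alternative to the paper's Nash--Moser iteration. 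It avoids the Sobolev inequality, at the cost of Laplacian comparison and Calabi's trick. Without these two ingredients, however, the argument does not close.
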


Moreover, for $n\le 3$ and $1<p<\frac{8}{n}$ (which is larger than $p_D$ in \eqref{e:p-d} for $n=2$), we have

\begin{thm} \label{thm:main3}
Suppose $(M,g)$ is an n-dimensional complete Riemannian manifold satisfying \eqref{e:ric-condition} and $n\le 3$. For any $p$ satisfying
\begin{itemize}
\item[(P4)] $1<p<\frac{8}{n}$,
\end{itemize}
there exists a constant $\ld\in(0,1)$ depending only on $n$ and $p$, such that if $a$ satisfies either of the following assumptions:
\begin{itemize}
\item[(A4)] $L_{\lambda}a=(\De+(\frac{p}{\lambda}-1)\partial_t)a\ge 0$ on $P_{R,T}$;
\item[(A5)] there exists $K\geq 0$ such that $\left|L_{\lambda}a\right|\leq K|a|$ on $P_{R, T}$,
\end{itemize}
then any positive solution to equation \eqref{e:main} on $P_{R,T}$ satisfies
\begin{equation}\label{ineq:zhuyao3}
\ld\frac{|\nabla f|^2}{f^2}-\frac{f_t}{f}+\frac{\ld}{p} a(x,t)f^{p-1}\leq C(n,p,\ld)\left(\frac{1}{T}+\frac{1}{R^2}+\ka+K\right)
\end{equation}
on $P_{R/2, T/2}$, where $K=0$ in case $(A4)$, and $C(n, p,\ld)$ is a positive constant depending on $n$, $p$ and $\ld$.
\end{thm}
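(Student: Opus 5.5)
We plan to prove Theorem~\ref{thm:main3} by the same scheme as the other main theorems: derive a differential inequality for an auxiliary quantity built from $\ln f$, then localize it on $P_{R,T}$.

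\emph{Setting up the auxiliary function.} Put $w=\ln f$ and $h=a(x,t)e^{(p-1)w}=af^{p-1}$. Then $w_t-\Delta w=|\nabla w|^2+h$. Consider
\[
F=\lambda|\nabla w|^2-w_t+\frac{\lambda}{p}h=-\Delta w-(1-\lambda)|\nabla w|^2-\Big(1-\frac{\lambda}{p}\Big)h ,
\]
which is exactly the left side of \eqref{ineq:zhuyao3}. We will compute $\mathcal{L}F$ for $\mathcal{L}=\Delta-\partial_t+2\nabla w\cdot\nabla$.

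\emph{Derivative terms and the role of $L_\lambda a$.} By Bochner's formula and \eqref{e:ric-condition}, the gradient part contributes $2\lambda|\nabla^2w|^2-2\lambda(n-1)\kappa|\nabla w|^2$. The main point is the bookkeeping of the terms where derivatives fall on $a$. In $\frac{\lambda}{p}\mathcal{L}h$ we get $\frac{\lambda}{p}e^{(p-1)w}(\Delta a-\partial_t a)$. From $-w_t$, after using the equation for $w$, we get $\partial_t h\ni e^{(p-1)w}\partial_t a$ with coefficient $1$. The total time-derivative coefficient is therefore $\frac{\lambda}{p}\big(\frac{p}{\lambda}-1\big)$, so these terms assemble into $\frac{\lambda}{p}e^{(p-1)w}L_\lambda a$. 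This is precisely why the coefficient $\frac{\lambda}{p}$ and the operator $L_\lambda$ appear.

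Under (A4) this term is $\ge0$. Under (A5) it is $\ge-\frac{\lambda}{p}K|h|$, which is absorbed later into the right-hand side. The mixed terms $\nabla a\cdot\nabla w$ have to be tracked as well. We expect them to cancel against the drift $2\nabla w\cdot\nabla h$ up to multiples of $h|\nabla w|^2$ and $h\Delta w$, which are harmless because they are expressible through $F$. If they do not cancel, the fallback is to modify the drift coefficient and use Cauchy--Schwarz.

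\emph{Algebraic positivity.} We will use the trace inequality $|\nabla^2 w|^2\ge\frac1n(\Delta w)^2$ and substitute $\Delta w=-F-(1-\lambda)y-(1-\frac{\lambda}{p})z$, where $y=|\nabla w|^2\ge0$ and $z=h$ may have either sign. This yields
\[
\mathcal{L}F\ \ge\ \frac{2\lambda}{n}\Big(F+(1-\lambda)y+\big(1-\tfrac{\lambda}{p}\big)z\Big)^2+Q(y,z)+(\text{terms linear in }F)\,(\cdots)-C(\kappa+K)(y+|z|),
\]
with $Q$ an explicit quadratic form in $(y,z)$ whose coefficients depend on $n,p,\lambda$. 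The main obstacle is to show that there is $\lambda\in(0,1)$ making the combined quadratic expression in $(F,y,z)$ bounded below by $c(F^2+y^2+z^2)$. Because $z$ is sign-indefinite, this is a genuine discriminant condition, not just a sign check.

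We will first write the full $3\times3$ form, treat $F$ as a free variable, and reduce to positive definiteness in $(y,z)$ for $y\ge0$, $z\in\mathbb{R}$. We expect the discriminant condition to be solvable in $\lambda$ exactly when $p<8/n$ for $n\le3$. Once it holds, we obtain $\mathcal{L}F\ge cF^2-C(\kappa+K)|F|-C(\kappa+K)^2$ wherever $F>0$, after absorbing $y,|z|$ via Young's inequality.

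\emph{Localization.} Finally, we localize with a Li--Yau cutoff $\psi(x,t)$ supported in $P_{R,T}$, equal to $1$ on $P_{R/2,T/2}$, and satisfying $|\nabla\psi|^2/\psi\le C/R^2$, $-\Delta\psi\le C(1/R^2+\sqrt\kappa/R)$, $|\partial_t\psi|\le C/T$. Here $\Delta\psi$ is controlled through Laplacian comparison, and Calabi's trick handles the cut locus. Either a maximum principle applied to $\psi F$ or the Nash--Moser iteration on $F_+$ used elsewhere in the paper then gives $F\le C(n,p,\lambda)(1/T+1/R^2+\kappa+K)$ on $P_{R/2,T/2}$. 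In the cutoff argument, the gradient term $2\nabla w\cdot\nabla\psi\,F$ is controlled by $y\le C(|F|+|z|)$ plus the $y^2$ gain from the quadratic form.
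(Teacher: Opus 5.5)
\textbf{Gap in the algebraic step.} Your set-up agrees with the paper's. With the coefficient $\frac{\lambda}{p}$ (i.e.\ $\delta=\lambda/p$), the $\nabla a\cdot\nabla w$ terms and the $h|\nabla w|^2$ terms cancel exactly, and the $a$-derivatives assemble into $\frac{\lambda}{p}e^{(p-1)w}L_\lambda a$. The problem is the positivity step.

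With this choice there is no separate $Q(y,z)$. The form is exactly $\frac{2\lambda}{n}U^2-(p-1)Fz$ with $U=F+(1-\lambda)y+(1-\frac{\lambda}{p})z$, and for no $\lambda\in(0,1)$ can it be bounded below by $c(F^2+y^2+z^2)$ on $\{F>0,\ y\ge 0\}$. To see this, take $F=\epsilon$, $y=Y$ and $z=-\frac{\epsilon+(1-\lambda)Y}{1-\lambda/p}$. Then $U=0$, and the form equals $\frac{(p-1)\epsilon(\epsilon+(1-\lambda)Y)}{1-\lambda/p}$, which tends to $0$ as $\epsilon\to0$ while $y^2+z^2\ge Y^2$. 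These values are admissible, since $\Delta w$ is pointwise free and $a$ may be negative.

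So the discriminant condition you hope to solve for $p<8/n$ is never satisfiable, and everything downstream fails:
\begin{itemize}
\item Young's absorption of $C(\kappa+K)(y+|z|)$ needs $y^2$ and $z^2$ terms you do not have.
\item There is no ``$y^2$ gain'' available for the localization.
\item The bound $y\le C(|F|+|z|)$ is false, because $F=-\Delta w-(1-\lambda)y-(1-\frac{\lambda}{p})z$ with $\Delta w$ unconstrained.
\item Your intermediate conclusion $\mathcal{L}F\ge cF^2-\dots$ throws away the only gradient term that can control the drift.
\end{itemize}

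\textbf{The missing idea.} Aim instead for a lower bound in which every term carries a factor of $F$, as in Lemma~\ref{lem:H_estimate2}: for $F>0$ and every $|\theta|<\epsilon_3$,
\[ \frac{2\lambda}{n}U^2-(p-1)Fz\ \ge\ \epsilon_1(\theta)F^2+\epsilon_2(\theta)Fy+\theta Fz. \]
To prove it, write $(p-1)Fz=(p-1+\theta)Fz-\theta Fz$, substitute $z=(U-F-(1-\lambda)y)/(1-\frac{\lambda}{p})$, and eliminate $U$ by completing the square. The resulting $F^2$ coefficient has the right sign iff $np(p-1+\theta)<8\lambda(p-\lambda)$. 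With $\lambda$ near $1$ (e.g.\ $\lambda=\frac{np+8}{16}$) this is exactly where $p<8/n$ enters. The $Fy$ coefficient is $\frac{(p-1+\theta)(1-\lambda)}{1-\lambda/p}>0$.

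Because $z$ changes sign, choose $\theta=\pm\epsilon_3/2$ pointwise according to the sign of $z$. This produces $\frac{\epsilon_3}{2}F|z|$, which absorbs the (A5) error $\frac{\lambda}{p}K|z|$ once $F\ge\frac{2\lambda}{p\epsilon_3}K$. Likewise $\epsilon_2Fy$ absorbs $2\lambda(n-1)\kappa y$ once $F\ge\frac{2\lambda(n-1)}{\epsilon_2}\kappa$.

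So one works with the shifted quantity $u=F-\frac{2\lambda(n-1)}{\epsilon_2}\kappa-\frac{2\lambda}{p\epsilon_3}K$ and obtains $(\partial_t-\Delta)u\le-\epsilon_1u^2-\epsilon_2|\nabla w|^2u+2\langle\nabla w,\nabla u\rangle$ on $\{u>0\}$. The retained term $\epsilon_2|\nabla w|^2u$ is what controls the drift:
\begin{itemize}
\item in a cutoff argument, via $2u|\nabla w||\nabla\psi|\le\epsilon_2\psi u|\nabla w|^2+u|\nabla\psi|^2/(\epsilon_2\psi)$;
\item in the paper's Nash--Moser iteration, via Young's inequality on $2\langle\nabla g,\nabla u\rangle$.
\end{itemize}
Once you have this pointwise inequality, your Li--Yau maximum-principle localization is a legitimate alternative to the paper's Nash--Moser argument.
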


\begin{rmk}
    The difference between Theorem~\ref{thm:main2} and Theorem~\ref{thm:main3} lies in the fact that, when $n\le 3$, the choice of the constant $\ld=\frac{p}{2}$ in Theorem~\ref{thm:main2} might exceed the admissible range $(0,1)$. In fact, in Theorem~\ref{thm:main3}, the constant $\ld$ is taken to be sufficiently close to 1.
\end{rmk}

\begin{rmk}
    Actually, we only need to assume that the function $a$ satisfies either $(A2)$ or $(A3)$ at each point $(x,t)\in P_{R,T}$ in Theorem~\ref{thm:main2}, and the result still holds true. Similarly, we may assume that $a$ satisfies either $(A4)$ or $(A5)$ at each point $(x,t)\in P_{R,T}$ in Theorem~\ref{thm:main3}.
\end{rmk}

\begin{rmk}
There are plenty of functions satisfying the assumptions on $a$ in Theorem~\ref{thm:main2} and Theorem~\ref{thm:main3}. For example, the following functions satisfy Theorem~\ref{thm:main3}.
\begin{itemize}
    \item $a(x,t) = a(x)$ only depends on $x$, and
    \[ \Delta a \ge 0 \text{ or } |\Delta a| \le K|a|; \]
    \item $a(x, t) = a(t)$ only depends on $t$, and 
    \[ \p_t a\ge 0 \text{ or } (\frac p\ld-1)|\p_t a|\le K|a|;\]
    \item Let $\phi$ be a Laplace eigenfunction of $M$ satisfying $\Delta \phi = \eta \phi$ for some $\eta\ge 0$, then the function
    \[ a(x,t) = e^{ct}\phi(x) \]
    satisfies assumption (A5) with
    \[ K = |(\frac p\ld-1)c +  \eta|.\]
\end{itemize}
\end{rmk}

As an application of the above estimates, we get the following Liouville theorems for equation~\eqref{e:main}, which extend previous results. When $a\equiv c$ is a nonzero constant, the following Liouville theorem for the equation
\begin{equation}\label{e:c}
    \p_t f - \Delta f = c f^p
\end{equation} 
follows from Theorem~\ref{thm:main1}. 
In particular, it also applies to corresponding elliptic equations~\eqref{e:GS1981-a} with constant coefficient, which was extensively studied (cf. \cite{GS1981, WW23, HWW24}).

\begin{cor}\label{thm:liouville1}
Suppose $M$ is an $n$-dimensional $(n\geq 3)$ complete non-compact Riemannian manifold with nonnegative Ricci curvature. 
\begin{itemize}
    \item[(1)] If $$c>0, \quad 1<p<p_D$$
    or $$c<0, \quad p> 1,$$ then equation \eqref{e:c} admits no positive eternal solutions on $ M\times(-\infty,+\infty)$.
    \item[(2)] If $$c>0, \quad p<1,$$ then equation \eqref{e:c} admits no positive ancient solutions.
\end{itemize}
\end{cor}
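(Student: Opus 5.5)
We specialize Theorem~\ref{thm:main1} to the constant coefficient $a\equiv c$ and send the parabolic cylinder to infinity. The result is a pointwise differential inequality, $f_t\ge \de_0|c|f^p$, and integrating this inequality in time along a fixed spatial point gives a contradiction.

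\textbf{Step 1: the theorem applies with $K=0$ and $\ka=0$.} Take $a\equiv c\neq 0$. Then $\p_t a=\De a=\n a=0$, so assumption (A1) holds with $K=0$. Since $\mathrm{Ric}\ge 0$, we may take $\ka=0$ in \eqref{e:ric-condition}. The sign hypotheses are exactly (P1) or (P2):
\begin{itemize}
\item $c>0$ with $1<p<p_D$, or $c>0$ with $p<1$, is covered by (P1), which only asks $p<p_D$;
\item $c<0$ with $p>1$ is (P2).
\end{itemize}

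\textbf{Step 2: the estimate on all of the domain of $f$.} Let $f$ be a positive eternal solution in case (1), or a positive ancient solution on $M\times(-\infty,0]$ in case (2). Fix a point $(x_0,t_0)$ in the domain, with $t_0\le 0$ in the ancient case. For every $R,T>0$ the cylinder $P_{R,T}=B(x_0,R)\times(t_0-T,t_0]$ lies in the domain of $f$, because the solution exists for all earlier times. Theorem~\ref{thm:main1} then gives, at $(x_0,t_0)$,
\[
\ld\frac{|\n f|^2}{f^2}-\frac{f_t}{f}+\de_0|c|f^{p-1}\le C(n,p,\ld,\de_0)\Big(\frac1T+\frac1{R^2}\Big).
\]
The constants $\ld$, $\de_0$ and $C$ depend only on $n$ and $p$, not on $R$ or $T$. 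Since $M$ is complete and non-compact, we may let $R\to\infty$ and $T\to\infty$. The right-hand side tends to $0$, so at every point of the domain
\[
f_t\ \ge\ \ld\frac{|\n f|^2}{f}+\de_0|c|f^{p}\ \ge\ \de_0|c|\,f^p .
\]

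\textbf{Step 3: ODE contradiction along a fixed $x$.}

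In case (1) we have $p>1$. Differentiating,
\[
\p_t\big(f^{1-p}\big)=(1-p)f^{-p}f_t\le-(p-1)\de_0|c|.
\]
Fix $x$ and a time $s$, and integrate forward: for $t>s$,
\[
0<f^{1-p}(x,t)\le f^{1-p}(x,s)-(p-1)\de_0|c|(t-s).
\]
The right-hand side becomes negative for $t$ large, which contradicts the existence of $f$ for all $t>s$. Hence no positive eternal solution exists. This argument treats both signs of $c$ at once, since only $|c|$ enters.

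In case (2) we have $c>0$ and $p<1$. Here
\[
\p_t\big(f^{1-p}\big)=(1-p)f^{-p}f_t\ge(1-p)\de_0c>0.
\]
Fix $x$ and $t\le 0$, and integrate backward: for $s<t$,
\[
0<f^{1-p}(x,s)\le f^{1-p}(x,t)-(1-p)\de_0c\,(t-s).
\]
The right-hand side becomes negative as $s\to-\infty$, contradicting the existence of $f$ for all $s<t$. Hence no positive ancient solution exists.

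\textbf{Where the care is needed.} The only delicate point is the passage $R,T\to\infty$ in Step 2. It requires the constant in \eqref{ineq:zhuyao1} to be independent of $R$ and $T$, which Theorem~\ref{thm:main1} provides. It also requires the full backward cylinder to lie in the domain; this is exactly why case (1) needs eternal solutions (its contradiction runs forward in time) while case (2) only needs ancient ones. The excluded value $p=1$ is consistent with this: the limiting inequality would only give $f_t\ge\de_0 c f$, which yields no contradiction.
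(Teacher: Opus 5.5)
Your proof is correct and follows the same route as the paper. You apply Theorem~\ref{thm:main1} with $a\equiv c$ (so $K=0$ and $\ka=0$), let $R,T\to\infty$ to get $f^{-p}f_t\ge \de_0|c|$, and then integrate in time at a fixed point, forward when $p>1$ and backward when $p<1$, to contradict the positivity of $f^{1-p}$.
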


\begin{rmk}
    When $p=1$ and the linear equation
    \[ \p_t f - \Delta f = c f,\]
    does admit non-trivial positive eternal solutions. For example, even on the Euclidean space, there is a non-trivial solution $f = e^{ct}$.
\end{rmk}

For a nonconstant function $a$, the following Liouville theorem for ancient or eternal solutions to \eqref{e:main} follows directly from Theorem~\ref{thm:main2}. To our knowledge, this seems to be the first Liouville theorem for equation~\eqref{e:main} which allows $a(x,t)$ to change sign.

\begin{cor} \label{thm:liouville2}
    Suppose $M$ is an $n$-dimensional complete non-compact Riemannian manifold with nonnegative Ricci curvature and $n\ge 4$. Let $T_0$ be a real number or $+\infty$. Suppose $1<p<\frac{n}{n-2}$ and $a(x,t)$ satisfies
\begin{itemize}
    \item $(\Delta+\p_t)a\ge 0$ on $M\times(-\infty, T_0)$,
    \item there exists a point $x_0\in M$ and $T_1\in(-\infty, T_0)$ such that 
    \[
    \lim_{s\rightarrow T_0}\int_{T_1}^{s}a(x_0,t)\,dt=+\infty.
\]
\end{itemize}  
Then equation~\eqref{e:main} admits no positive solution on $M\times(-\infty,T_0)$.
\end{cor}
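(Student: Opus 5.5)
We will argue by contradiction, feeding a hypothetical positive solution into Theorem~\ref{thm:main2} under assumption (A2) with $K=0$ and $\kappa=0$. Let $f$ be a positive solution of \eqref{e:main} on $M\times(-\infty,T_0)$. Fix any $t_0<T_0$, any $T>0$ and any $R>0$. The cylinder $P_{R,T}=B(x_0,R)\times(t_0-T,t_0]$ lies in the domain, and on it $(\Delta+\p_t)a\ge 0$. So Theorem~\ref{thm:main2} gives, on $P_{R/2,T/2}$,
\[
\frac{p}{2}\frac{|\nabla f|^2}{f^2}-\frac{f_t}{f}+\frac12 a f^{p-1}\le C(n,p)\Big(\frac1T+\frac1{R^2}\Big).
\]
(If $a\equiv0$ on some cylinder, the estimate still holds trivially or the theorem applies verbatim; this is a minor point to check.) Since $M$ is complete and non-compact and the solution is ancient, we let $R\to\infty$ and $T\to\infty$ with $(x_0,t_0)$ fixed. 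This yields, at every point of $M\times(-\infty,T_0)$,
\[
\frac{p}{2}\frac{|\nabla f|^2}{f^2}-\frac{f_t}{f}+\frac12 a f^{p-1}\le 0 .
\]

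Dropping the gradient term, we get $\p_t \log f\ge \frac12 a f^{p-1}$ along the line $x=x_0$. In particular $f_t\ge \frac12 a f^p$, so we consider $u(t)=f(x_0,t)^{1-p}$. Since $p>1$, we have
\[
u'(t)=(1-p)f^{-p}f_t\le -\frac{p-1}{2}\,a(x_0,t).
\]
Integrating from $T_1$ to $s<T_0$ gives
\[
0<u(s)\le u(T_1)-\frac{p-1}{2}\int_{T_1}^s a(x_0,t)\,dt .
\]
By the second hypothesis, the right-hand side tends to $-\infty$ as $s\to T_0$, which contradicts positivity of $u$. Hence no positive solution exists.

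The only genuine obstacle is justifying the passage $R,T\to\infty$. This needs the constant in Theorem~\ref{thm:main2} to be independent of $R,T$ and of the base point, which it is since $C=C(n,p)$ and $K=0$. It also needs every cylinder $B(x_0,R)\times(t_0-T,t_0]$ to fit inside $M\times(-\infty,T_0)$, which holds because $t_0<T_0$ and the solution is defined for all earlier times. The ODE step uses $p>1$ essentially. Note that $a$ may change sign: the inequality $u'\le -\frac{p-1}{2}a$ holds pointwise regardless of sign, so no sign condition on $a$ is needed.
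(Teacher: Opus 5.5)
Your proof is correct and is essentially the paper's own argument: you apply Theorem~\ref{thm:main2} under (A2) with $\kappa=K=0$, let $R,T\to\infty$ to obtain $\frac{1}{2}a f^{p-1}\le f_t/f$, and integrate $f^{-p}f_t$ at $x_0$ from $T_1$ toward $T_0$ to contradict the positivity of $f^{1-p}$. The side issue you flag, where $a\equiv 0$ on some cylinder, is harmless (the paper does not address it either): for $s$ close to $T_0$ the divergence hypothesis forces $a(x_0,\cdot)\not\equiv 0$ on $[T_1,s]$, so you can work only with cylinders ending at such times $s$.
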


A similar Liouville theorem for $n\le 3$ follows from Theorem~\ref{thm:main3}, which we omit. In particular, when the coefficient function $a=a(x)$ is independent of $t$, we also get a Liouville theorem for eternal solutions to the equation
\begin{equation}\label{e:a-x}
    \p_t f - \Delta f = a(x) f^p.
\end{equation}
The result also applies to the corresponding elliptic equation~\eqref{e:GS1981-a} with variable coefficient, which extends the classical results of Gidas-Spruck~\cite{GS1981} and Li~\cite{L1991}.

\begin{cor} \label{thm:liouville3}
    Suppose $M$ is an $n$-dimensional complete non-compact Riemannian manifold with nonnegative Ricci curvature. Suppose 
\begin{itemize}
    \item $1<p<\frac{n}{n-2}$ for $n\ge 4$,
    \item $1<p<\frac{8}{n}$ for $n\le 3$,
\end{itemize}
and
\begin{itemize}
    \item $\Delta a(x)\ge 0$ on $M$,
    \item $\sup_{x\in M} a(x) >0$.
\end{itemize}
Then equation~\eqref{e:a-x} admits no positive eternal solutions on $M\times (-\infty, +\infty)$.
\end{cor}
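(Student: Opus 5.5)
The plan is to derive Corollary~\ref{thm:liouville3} from Theorem~\ref{thm:main2} when $n\ge 4$ and from Theorem~\ref{thm:main3} when $n\le 3$. Since $a=a(x)$ does not depend on $t$, we have $(\Delta+\p_t)a=\Delta a\ge 0$, which is (A2). Likewise $L_\ld a=\Delta a\ge 0$ for every $\ld$, which is (A4). In both cases $K=0$, and $\ka=0$ because $\mathrm{Ric}\ge 0$. Argue by contradiction and suppose $f$ is a positive eternal solution. Fix $(x,t)$ and apply the estimate on $P_{R,T}=B(x,R)\times(t-T,t]$. Letting $R,T\to\infty$ gives, on all of $M\times\R$, the inequality
\[
\ld\frac{|\n f|^2}{f^2}-\frac{f_t}{f}+\mu\, a(x)f^{p-1}\le 0 .
\]
Here $(\ld,\mu)=(p/2,1/2)$ when $n\ge4$ and $(\ld,\mu)=(\ld,\ld/p)$ when $n\le3$.

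Next, choose $x_0$ with $a(x_0)>0$, which is possible since $\sup a>0$. Restrict to the line $\{x_0\}\times\R$ and set $u(t)=f(x_0,t)$. Dropping the nonnegative gradient term gives
\[
u'\ge \mu\, a(x_0)\, u^{p}.
\]
With $c=\mu a(x_0)>0$ and $p>1$, this says $(u^{1-p})'\le -(p-1)c$. Integrating from $t_1$ to $t_2$ yields
\[
u^{1-p}(t_2)\le u^{1-p}(t_1)-(p-1)c\,(t_2-t_1).
\]
The right-hand side becomes negative once $t_2-t_1$ is large, while the left-hand side is positive. So $u$ must blow up in finite forward time, which contradicts the assumption that $f$ is positive and finite for all $t\in\R$. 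One could also run the argument backward in time, since the estimate forces $u^{1-p}(t)\ge (p-1)c\,(t_2-t)$; either direction gives the contradiction.

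The only point needing care is passing to the limit $R,T\to\infty$. The constants $C(n,p)$ and $C(n,p,\ld)$ do not depend on $R$ or $T$, and $f$ is defined on every cylinder, so the right-hand side $C(1/T+1/R^2)$ tends to $0$. One also needs the hypotheses (A2) and (A4) to hold on every cylinder, which is immediate here because $\Delta a\ge 0$ on all of $M$. For $n\le 3$, the $\ld\in(0,1)$ in Theorem~\ref{thm:main3} depends only on $n$ and $p$, and (A4) holds for any $\ld$, so no compatibility issue arises. This limiting step is the main check; the rest is the elementary ODE comparison above.
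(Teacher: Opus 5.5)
Your proposal is correct and follows the paper's proof essentially step for step: $\Delta a\ge 0$ gives (A2), resp.\ (A4), with $K=\ka=0$; letting $R,T\to\infty$ yields $f^{-p}f_t\ge \frac{\ld}{p}a(x_0)>0$ at a point with $a(x_0)>0$; and integrating forward in time forces $f^{1-p}(x_0,\cdot)$ to become negative. The one slip is your aside that the argument could be run backward in time: for fixed $t_2$, the bound $u^{1-p}(t)\ge (p-1)c(t_2-t)$ only says $u(t)\to 0$ as $t\to-\infty$ (indeed $u'=cu^p$ has ancient solutions $[(p-1)c(T^*-t)]^{-1/(p-1)}$), so the contradiction genuinely needs existence for all large positive times, exactly as in your main argument and in the paper.
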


\

Our proofs of main results rely on a differential inequality of a Li-Yau type auxiliary quantity and Nash-Moser iteration. A key feature of our gradient estimates, as compared to previous studies, lies in the careful choice of the parameters $\lambda$ and $\delta$ in \eqref{ineq:zhuyao1}. In fact, to obtain the optimal range of $p$ that guarantees the validity of such gradient estimates, one needs to fully exploit the adjustability of these parameters. This idea of modulating parameters is partially inspired by \cite{PWW21, HW22}, where Cheng-Yau type gradient estimates were derived for a class of semilinear elliptic equations, and has been explored in~\cite{L1991, CDM26}.

Compared to the result \eqref{e:du} of Chen-Du-Ma~\cite{CDM26}, the presence of the variable coefficient $a(x, t)$ in \eqref{e:main} introduces additional analytical challenges. Specifically, in deriving the differential inequalities for a Li--Yau-type quantity, the derivatives of $a$ yield lower-order terms involving $\nabla a$, $\Delta a$, and $\p_t a$. These terms can be effectively controlled when they are bounded proportionally to $a$ itself. This observation motivates assumption (A1) in Theorem~\ref{thm:main1}. However, as mentioned in Remark~\ref{rmk:1}, $a$ must be strictly sign-definite under assumption (A1). 

To derive the Li--Yau-type estimate in Theorem~\ref{thm:main2} and Theorem~\ref{thm:main3}, where $a$ is allowed to change sign, new observations are needed. We find that under a specific choice of the parameters (i.e. $\ld = p\delta$) within the auxiliary quantity in \eqref{ineq:zhuyao2} and \eqref{ineq:zhuyao3}, the terms containing $\nabla a$ vanish, leaving only the terms involving $\Delta a$ and $\partial_t a$, which can be handled under assumptions~(A2-A5).

\

The rest of the paper is organized as follows. In Section~\ref{s:pointwise}, we define a Li-Yau type quantity with modulating parameters and derive a key differential inequality. Next in Section~\ref{s:integral}, using the Saloff-Coste’s Sobolev inequality, we derive an integral inequality and an initial integral estimate where the Nash-Moser iteration is readily applied. Finally, Section~\ref{s:proof} contains the proofs of our main theorems and corollaries.

\section{Pointwise differential inequality}\label{s:pointwise}

Suppose $(M,g)$ is an $n$-dimensional complete Riemannian manifold with Ricci curvature lower bound \eqref{e:ric-condition}. Suppose $f$ is a classical positive solution to equation \eqref{e:main} on $P_{R,T}$. As in the existing literature, we make use of the logarithmic transformation
$$g:=\ln f.$$
A direct computation shows
\begin{equation}\label{ht2-eq}
\p_t g-\De g=|\nabla g|^2+a(x,t)e^{(p-1)g}.
\end{equation}

Set
\[ G: = \lambda\frac{|\nabla f|^2}{f^2} -  \frac{\p_t f}{f}+\de a(x,t)f^{p-1}=\lambda|\nabla g|^2- g_t+\de a(x,t)e^{(p-1)g}\]
where $\ld\in (0,1)$ and $\de$ are constants to be determined later. 

We first derive a differential identity for the quantity $G$. We start with
\begin{equation} \label{eq:operator_G}
(\partial_t-\De)G = \lambda(\partial_t-\De)|\nabla g|^2 - ( \partial_t-\Delta  )g_t + \delta(\partial_t-\De)\big(a(x,t)e^{(p-1)g}\big).
\end{equation}
For the first term on the right-hand side of \eqref{eq:operator_G}, by the Bochner formula and the identity
$$\lambda|\n g|^2=g_t+G-\de a(x,t)e^{(p-1)g},$$ 
we have
\begin{align*}
\lambda( \partial_t-\Delta )|\nabla g|^2 &= -2\lambda|\nabla^2 g|^2 + 2\lambda\langle \nabla g, \nabla( g_t-\Delta g ) \rangle - 2\lambda\mathrm{Ric}(\n g,\n g)\\
= &-2\lambda|\nabla^2 g|^2 +2\lambda\langle \nabla g, \nabla|\nabla g|^2 \rangle + 2\lambda\langle \nabla g, \nabla\big(a(x,t)e^{(p-1)g}\big) \rangle- 2\lambda\mathrm{Ric}(\n g,\n g)\\
=&-2\lambda|\nabla^2 g|^2 +2\langle \nabla g, \nabla g_t \rangle+2\langle\n g,\n G\rangle+2(\lambda-\de) e^{(p-1)g}\langle\n g,\n a\rangle \\
&+2(\lambda-\de)(p-1) a(x,t)e^{(p-1)g}|\n g|^2- 2\lambda\mathrm{Ric}(\n g,\n g).
\end{align*}
For the second term in \eqref{eq:operator_G}, taking the derivative of both sides of the equation $\eqref{ht2-eq}$ with respect to $t$ and using $$g_t=\lambda|\n g|^2-G+\de a(x,t)e^{(p-1)g},$$we get
\begin{align*}
(\partial_t-\Delta  )g_t=&2\langle\n g,\n g_t\rangle+(p-1)a(x,t)e^{(p-1)g}g_t+\partial_t ae^{(p-1)g}\\
=&2\langle\n g,\n g_t\rangle+\lambda(p-1)a(x,t)e^{(p-1)g}|\n g|^2-(p-1)a(x,t)e^{(p-1)g}G\\
&+\de(p-1)a^2(x,t)e^{2(p-1)g}+\partial_t ae^{(p-1)g}.
\end{align*}
For the third term in \eqref{eq:operator_G}, we have
\begin{align*}
\de(\partial_t-\De)(a(x,t)e^{(p-1)g})&= \de (p-1)a(x,t)e^{(p-1)g}g_t+\de e^{(p-1)g}(\partial_t-\De) a\\
&-2\de (p-1)e^{(p-1)g}\langle\n g,\n a\rangle-\de (p-1)^2a(x,t)e^{(p-1)g}|\n g|^2\\
&-\de (p-1)a(x,t)e^{(p-1)g}\De g\\
&=\de(2-p)(p-1)a(x,t)e^{(p-1)g}|\n g|^2+\de (p-1)a^2(x,t)e^{2(p-1)g}\\
&+\de e^{(p-1)g}(\partial_t-\De) a-2\de (p-1)e^{(p-1)g}\langle\n g,\n a\rangle.
\end{align*}
Consolidating the above identities into \eqref{eq:operator_G}, we obtain
\begin{equation}
    \label{eq:Gmian2}
    \begin{aligned}
(\partial_t-\Delta )G =  & -2\lambda|\nabla^2 g|^2 - 2\lambda\mathrm{Ric}(\n g,\n g) + 2\langle\nabla g, \nabla G\rangle \\
&+ (p - 1)(\lambda - \delta p)  a(x,t)e^{(p-1)g} |\nabla g|^2  + (p - 1)a(x,t)e^{(p-1)g} G\\ &+2(\lambda - \delta p)e^{(p-1)g} \langle \nabla g, \nabla a \rangle  -\de e^{(p-1)g}\De a+(\de-1)e^{(p-1)g}\partial_ta.
\end{aligned}
\end{equation}

Next, we estimate the Hessian term by the elementary algebraic inequality
\[ |\nabla^2 g|^2 \ge \frac{1}{n}(\Delta g)^2. \]
Using the equation \eqref{ht2-eq}, we may express the Laplacian of $g$ in terms of $G$ by
\[
\Delta g = -G - ( 1-\lambda)|\nabla g|^2 - (1-\delta)a(x,t)e^{(p-1)g}.
\]
It follows
\begin{equation}
    \label{ineq:hessg}
    \begin{aligned}
    |\nabla^2 g|^2 \ge & \frac1{n} \Big[ G^2 + (1-\lambda )^2|\nabla g|^4 + (1-\delta)^2 a^2(x,t)e^{2(p-1)g} \\
& + 2( 1-\lambda)|\nabla g|^2 G + 2(1-\delta)a(x,t)e^{(p-1)g}G \\
& + 2( 1-\lambda)(1-\delta)a(x,t)e^{(p-1)g}|\nabla g|^2 \Big].
\end{aligned}
\end{equation}
Keeping the Ricci curvature lower bound \eqref{e:ric-condition} in mind and substituting $\eqref{ineq:hessg}$ into $\eqref{eq:Gmian2}$, we arrive at
\begin{equation}\label{e:G-ineq}
    \begin{aligned}
(\partial_t-\Delta )G \le  & -\frac{2\lambda}{n} G^2 - \frac{4\lambda}{n}(1-\lambda)|\nabla g|^2 G + \left[ (p - 1) - \frac{4\lambda}{n}(1 - \delta) \right] a(x,t)e^{(p-1)g} G \\
& + 2\langle\nabla g, \nabla G\rangle - \frac{2\lambda}{n}(1-\lambda )^2 |\nabla g|^4 \\
& + \left[  (p - 1)(\lambda - \delta p)-\frac{4\lambda}{n}(1-\lambda)(1 - \delta)  \right] a(x,t)e^{(p-1)g} |\nabla g|^2 \\
& + 2\lambda(n - 1)\kappa |\nabla g|^2 +2(\lambda - \delta p)e^{(p-1)g} \langle \nabla g, \nabla a \rangle \\
& - \frac{2\lambda}{n}(1 - \delta)^2 a^2(x,t) e^{2(p-1)g}-\de e^{(p-1)g}\De a+(\de-1)e^{(p-1)g}\partial_ta.
\end{aligned}
\end{equation}

To proceed, we define the quadratic function
\begin{equation}\label{e:H}
    \begin{aligned}
H(X, Y, Z):= &-\frac{2\lambda}{n} X^2- \frac{2\lambda}{n}(1-\lambda )^2 Y^2 -\frac{4\lambda}{n}(1-\lambda)XY- \frac{2\lambda}{n}(1- \delta)^2 Z^2 \\
&+ \left[(p-1)-\frac{4\lambda}{n}(1 -\delta)\right] XZ + \left[(p - 1)(\lambda - \delta p)-\frac{4\lambda}{n}(1-\lambda)(1 - \delta)\right] YZ.
\end{aligned}
\end{equation}
where $X> 0, Y\ge 0$ and $Z\in \R$. We need the following two technical lemmas.

\begin{lem}\label{lem:H_estimate}
Suppose $n\geq 3$, $p$ and $Z$ satisfy one of the following assumptions:
\begin{itemize}
    \item [(Z1)] $Z\geq 0$ and $p<p_D:=\frac{n+2+\sqrt{n^2+8n}}{2(n-1)}$;
    \item [(Z2)] $Z\leq 0$ and $p>1$;
    
\end{itemize}
Then there exist constants $\lambda\in(0,1)$, $\de \in \mathbb{R}$, and $\ep_1, \ep_2, \ep_3>0$ such that
\begin{equation}\label{e:lem1}
    H(X,Y,Z) \leq -\ep_1X^2-\ep_2XY-\ep_3Z^2.
\end{equation}
Here these constants depend only on $n$ and $p$. Moreover, we can choose $\de>0$ in case (Z1) and $\de<0$ in (Z2), respectively.
\end{lem}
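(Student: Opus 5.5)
The plan is to reduce \eqref{e:lem1} to a strict negativity statement for the quadratic form $H$ on a closed cone, and then choose $(\lambda,\de)$ case by case. Since $H$ is homogeneous of degree two, it suffices to find $\lambda\in(0,1)$ and $\de$ such that $H<0$ on the compact set
\[
S=\{(X,Y,Z):\ X,Y\ge 0,\ Z\ \text{of the prescribed sign},\ X^2+Y^2+Z^2=1\}.
\]
Indeed, $H$ then attains a maximum $-c<0$ on $S$, so $H\le -c(X^2+Y^2+Z^2)$ on the whole cone. Using $XY\le \frac12(X^2+Y^2)$, this gives \eqref{e:lem1} with, e.g., $\ep_1=\ep_3=c/2$ and $\ep_2=c/2$. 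We include $X=0$ on purpose. When $X=0$, negativity holds provided $H(0,Y,Z)<0$. This is secured by the strictly negative $Y^2$ and $Z^2$ coefficients (which need $\lambda<1$ and $\de\neq 1$), together with the sign of the $YZ$ coefficient, or its domination as in the Cauchy--Schwarz step below.

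\textbf{Case (Z2)} ($Z\le 0$, $p>1$). We take $\de=-1$, so $1-\de=2$, and $\lambda$ small. The $XZ$ coefficient becomes $(p-1)-\frac{8\lambda}{n}$, which is $\ge 0$ once $\lambda\le n(p-1)/8$. The $YZ$ coefficient becomes $(p-1)(\lambda+p)-\frac{8\lambda}{n}(1-\lambda)$. Under the same smallness it is bounded below by $(p-1)(\lambda+p)-(p-1)(1-\lambda)=(p-1)(p-1+2\lambda)>0$. Since $Z\le0$ and $X,Y\ge0$, both mixed terms involving $Z$ are then $\le 0$. The $XY$ term $-\frac{4\lambda}{n}(1-\lambda)XY$ is also $\le0$. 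What remains is $-\frac{2\lambda}{n}X^2-\frac{2\lambda}{n}(1-\lambda)^2Y^2-\frac{8\lambda}{n}Z^2$, which is strictly negative on $S$. This case is routine and yields $\de<0$ as claimed.

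\textbf{Case (Z1)} ($Z\ge0$, $p<p_D$). This is the real content and the main obstacle. All variables are now nonnegative. The $XY$ term is favourable, but the $XZ$ and $YZ$ coefficients can be positive, so we must show strict copositivity of the $3\times3$ symmetric matrix of $-H$. For $p\le1$ one can take $\de$ close to $1$ from below (so $\de>0$) and check directly that both mixed coefficients are nonpositive; then the same argument as in (Z2) applies. For $1<p<p_D$, the first attempt is to use the $X$-direction. Viewing $H$ as a quadratic in $X$ with coefficient $-\frac{2\lambda}{n}$, we maximize over $X\ge0$. This leaves a binary quadratic form $Q(Y,Z)$ whose coefficients are explicit in $(n,p,\lambda,\de)$, and we need $Q<0$ on $\{Y,Z\ge0\}\setminus\{0\}$. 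That amounts to two conditions: the $Y^2$ and $Z^2$ coefficients of $Q$ are negative, and the $YZ$ coefficient exceeds $-2\sqrt{\text{product}}$. The resulting inequality is a polynomial condition $\Phi(n,p,\lambda,\de)<0$. We then optimize first in $\de$, which enters quadratically, and then in $\lambda$. We expect the optimum to occur as $\lambda\to1^-$, with the threshold given by the quadratic equation $(n-1)p^2-(n+2)p-2=0$, whose positive root is $p_D$. For $p<p_D$ we fix $\lambda$ close to $1$ and $\de\in(0,1)$ near the optimizer so that strict inequality holds. By continuity this gives strict negativity on $S$, hence constants depending only on $n,p$.

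The hardest step is this two-parameter optimization, which must reproduce exactly the bound $p_D$. I would first carry it out with $\de$ chosen to make the $XZ$ coefficient vanish, as a sanity check. That choice alone appears too crude for $n\ge4$, so the final choice must let the positive $XZ$ term be absorbed by the $X^2$ and $Z^2$ terms jointly with the $YZ$ analysis. This is why the elimination of $X$, rather than setting a coefficient to zero, is the intended route.
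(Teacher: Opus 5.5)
Your compactness reduction is sound. Your Case (Z2) is correct and simpler than the paper's: with $\delta=-1$ and $\lambda\le\min\{\frac{n(p-1)}{8},\frac12\}$, both $Z$-mixed coefficients are nonnegative, which avoids the paper's split at $p=1+\frac4n$ and its Young's inequality step. (For $0<p<1$ in (Z1), $\delta$ near $1$ works only if you also choose $\lambda>\delta p$; the paper takes $\delta$ small instead.)

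The genuine gap is Case (Z1) with $1<p<p_D$, which is the whole content of the lemma. You defer it to an optimization you never perform, and the route you sketch fails as stated. Maximizing over $X\ge0$ does not leave a single binary quadratic form. Wherever the coefficient of $X$ is positive, the maximum is the unconstrained one. Its $Y^2$ coefficient vanishes identically, because the $X^2$, $XY$, $Y^2$ terms of $H$ are exactly $-\frac{2\lambda}{n}(X+(1-\lambda)Y)^2$. So your criterion (negative $Y^2$ and $Z^2$ coefficients plus a discriminant bound) can never be met. Your predicted outcome is also wrong. The positive root of $(n-1)p^2-(n+2)p-2$ is $\frac{n+2+\sqrt{n^2+12n-4}}{2(n-1)}\neq p_D$; the relevant polynomial is $(n-1)p^2-(n+2)p-1$. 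And $\lambda\to1^-$ is not optimal: in that limit the admissible range shrinks to $np<8$, which for $n\ge 8$ excludes every $p>1$.

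The missing idea is the substitution $W=X+(1-\lambda)Y\ge0$, under which
\[ H=-\tfrac{2\lambda}{n}\bigl(W+(1-\delta)Z\bigr)^2+(p-1)WZ+(p-1)(2\lambda-\delta p-1)YZ, \]
together with the choice $\delta=\frac{2\lambda-1}{p}$, which kills the $YZ$ term. Using $WZ\le\frac{(W+(1-\delta)Z)^2}{4(1-\delta)}$, you get $H\le -s\,\bigl(X+(1-\lambda)Y+(1-\delta)Z\bigr)^2$ with $s>0$ provided $n(p-1)<8\lambda(1-\delta)$, i.e. $np(p-1)<8\lambda(p+1-2\lambda)$. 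The right side is maximized at $\lambda=\frac{p+1}{4}$, where the condition reads $(n-1)p^2-(n+2)p-1<0$, i.e. exactly $p<p_D$. Moreover $\delta=\frac{p-1}{2p}\in(0,1)$, and $\lambda<1$ since $p_D<3$. Expanding the square, with all variables and coefficients nonnegative, gives \eqref{e:lem1}.

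This choice is in fact optimal. For fixed $W,Z$, $H$ is linear in $Y\in[0,W/(1-\lambda)]$, and the two endpoint conditions are jointly satisfiable for some $\lambda\in(0,1)$ and $\delta<1$ only when $np(p-1)<8\lambda(p+1-2\lambda)$. Hence no other choice of $(\lambda,\delta)$ reaches beyond $p_D$.
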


\begin{proof}
We prove this lemma case by case.

\noindent
\textbf{Case (Z1):} $Z\geq 0$ and $p<p_D$.

We first rewrite \eqref{e:H} as
\begin{equation*}
H(X,Y,Z)=-\frac{2\lambda}{n}\big(X+(1-\lambda)Y+(1-\de)Z\big)^2+(p-1)XZ+(p-1)(\lambda-\de p)YZ.
\end{equation*}

If $p\leq 1$, since $X> 0$ and $Z\geq 0$, we have $(p-1)XZ\leq 0$. By choosing $\de \in (0, 1)$ sufficiently small such that $\lambda-\de p>0$, we also have $(p-1)(\lambda-\de p)YZ \le 0$. It follows that
\begin{equation}\label{con-3}
\begin{aligned}
H(X,Y,Z) &\leq -\frac{2\lambda}{n}\big(X+(1-\lambda)Y+(1-\de)Z\big)^2 \\
&\leq -\frac{2\lambda}{n} X^2 - \frac{4\lambda}{n}(1-\lambda)XY - \frac{2\lambda}{n}(1 - \delta)^2 Z^2.
\end{aligned}
\end{equation}

If $p>1$, we introduce a new variable $W:=X+(1-\lambda)Y>0$. Substituting $X=W-(1-\lambda)Y$ into $H$, we get
\begin{equation}\label{eq:H}
\begin{aligned}
H(W,Y,Z)&=-\frac{2\lambda}{n}(W+(1-\de)Z)^2+(p-1)\big(W-(1-\lambda)Y\big)Z+(p-1)(\lambda-\de p)YZ\\
&=-\frac{2\lambda}{n}(W+(1-\de)Z)^2+(p-1)WZ+(p-1)(2\lambda-\de p-1)YZ.
\end{aligned}
\end{equation}
Now set $\de = \frac{2\lambda-1}{p}$, so that the coefficient of the $YZ$ term vanishes. For a sufficiently small constant $s>0$, we rewrite and estimate $H$ as follows:
\begin{equation}\label{ineq:H}
\begin{aligned}
H(W,Y,Z) \leq & -\left(\frac{2\lambda}{n}-s\right)(W+(1-\de)Z)^2+(p-1)WZ-s(W+(1-\de)Z)^2\\
= & -W^2\bigg\{\left(\frac{2\lambda}{n}-s\right)-\left(p-1-\left(\frac{4\lambda}{n}-2s\right)(1-\de)\right)\frac{Z}{W} \\&+\left(\frac{2\lambda}{n}-s\right)(1-\de)^2\left(\frac{Z}{W}\right)^2\bigg\} -s(W+(1-\de)Z)^2\\
\leq & W^2\frac{(p-1)}{(1-\de)^2}\left(\frac{n(p-1)}{8\lambda-4ns}-(1-\de)\right)-s(W+(1-\de)Z)^2.
\end{aligned}
\end{equation}
Here, we use the basic algebraic inequality $a - bx + cx^2 \geq a - \frac{b^2}{4c}$ in the last line. 
Next, we choose $\lambda=\frac{p+1}{4}$. Note that $\ld\in (\frac12, 1)$ since $1<p<p_D<3$.
With these specific choices of $\de$ and $\ld$, we calculate
\begin{align*}
\frac{n(p-1)}{8\lambda}-(1-\de)&=\frac{n(p-1)}{2(p+1)}-1+\frac{p-1}{2p}\\
&=\frac{1}{2p(p+1)}\left[(n-1)p^2-(n+2)p-1\right].
\end{align*}
Observe that $(n-1)p^2-(n+2)p-1<0$ precisely when $1<p<p_D$. Therefore, for such $p$, we can choose $s_1>0$ small enough (depending on $n, p, \lambda$, and $\de$) such that
\[
\frac{n(p-1)}{8\lambda-4ns_1}-(1-\de)\leq 0.
\]
Consequently, the $W^2$ term in \eqref{ineq:H} is non-positive, yielding
\begin{equation}\label{con-4}
\begin{aligned}
H(X,Y,Z)&\leq -s_1(W+(1-\de)Z)^2= -s_1\left(X+\frac{3-p}{4}Y+\frac{p+1}{2p}Z\right)^2\\
&\leq -s_1X^2-s_1\frac{3-p}{2}XY-s_1\left(\frac{p+1}{2p}\right)^2Z^2.
\end{aligned}
\end{equation}

From \eqref{con-3} and \eqref{con-4}, we see that \eqref{e:lem1} holds in Case (Z1).

\noindent
\textbf{Case (Z2):} $Z\leq 0$ and $p>1$.

If $p\geq 1+\frac{4}{n}$, for any $\lambda\in (0,1)$, both coefficients of the last two terms in \eqref{e:H} are strictly positive at $\de=0$. Hence, by continuity, we choose $\de<0$ with $|\de|$ sufficiently small such that
\[
 (p - 1) - \frac{4\lambda}{n}(1 - \delta) \geq 0 \quad \text{and} \quad (p - 1)(\lambda - \delta p)-\frac{4\lambda}{n}(1-\lambda)(1 - \delta) \geq 0.
\]
Since $Z \leq 0$ and by assumption $X, Y \ge 0$, we can simply dorp the non-positive terms involving $XZ$, $YZ$ and $Y^2$, yielding
\begin{equation}\label{con-1}
H(X,Y,Z)\leq -\frac{2\lambda}{n} X^2 - \frac{4\lambda}{n}(1-\lambda)XY- \frac{2\lambda}{n}(1 - \delta)^2 Z^2.
\end{equation}

If $1<p<1+\frac{4}{n}$, there exists a sufficiently small $\lambda\in(0,1)$ such that 
\[ 1+\frac{4\lambda}{n} < p < 1+\frac{4(1-\lambda)}{n}.\]
Then we choose $\de<0$ with $|\de|$ sufficiently small such that
\begin{equation}\label{e:delta-1}
    p -1- \frac{4\lambda}{n} (1-\de) > 0
\end{equation}
and
\begin{equation}\label{e:delta-2}
    -\frac{4\lambda(1-\lambda)}{n}(1-\de)  < (p - 1)(\lambda-\de p)-\frac{4\lambda}{n}(1-\lambda)(1-\de) < 0.
\end{equation}

Now \eqref{e:delta-1} ensures the $XZ$ term in \eqref{e:H} is non-positive. For the $YZ$ term, we apply Young's inequality to get
\begin{align*}
&\left[ (p - 1)(\lambda - \delta p)-\frac{4\lambda}{n}(1-\lambda)(1 - \delta) \right] YZ\\
&\quad \leq \frac{2\lambda(\lambda-1)^2}{n}Y^2 +\frac{n}{8\lambda(\lambda-1)^2}\left[(p - 1)(\lambda - \delta p)-\frac{4\lambda}{n}(1-\lambda)(1 - \delta) \right]^2 Z^2.
\end{align*}
Inserting into \eqref{e:H} and dropping the non-positive terms, we arrive at
\begin{equation}\label{con-2}
\begin{aligned}
H(X,Y,Z)\leq & -\frac{2\lambda}{n} X^2 - \frac{4\lambda}{n}(1-\lambda)XY\\ 
&-\left[\frac{2\lambda}{n}(1-\de)^2-\frac{n}{8\lambda(\lambda-1)^2}\left((p - 1)(\lambda - \delta p)-\frac{4\lambda}{n}(1-\lambda)(1 - \delta) \right)^2\right]Z^2,
\end{aligned}
\end{equation}
By \eqref{e:delta-2}, the quantity in square brackets is strictly positive. Consequently, the coefficient of $Z^2$ is strictly negative. 

By \eqref{con-1} and \eqref{con-2}, we see that \eqref{e:lem1} holds in Case (Z2). This finishes the proof of the lemma.
\end{proof}

The following lemma gives the desired bound of $H$ when $Z$ is allowed to change sign.

\begin{lem}\label{lem:H_estimate2}
Suppose that $p$ satisfies the following condition:
\begin{itemize}
    \item [(Z3)] $1<p<\frac{n}{n-2}$ when $n>3$ or $1<p<\frac{8}{n}$ when $n\leq 3$.    
\end{itemize}
Then there exist constants $\lambda\in(0,1)$, $\de=\frac{\lambda}{p}<1 $, $\ep_3>0$ such that the following holds. For every $|\theta|<\ep_3$, there exist $\ep_1(\theta)>0, \ep_2(\theta)>0$ such that
\begin{equation}\label{e:lem2}
    H(X,Y,Z) \leq -\ep_1(\theta)X^2-\ep_2(\theta)XY-\theta XZ.
\end{equation}
In particular, we may choose $\lambda=\frac{p}{2}$ if $n\geq 4$ and $\ld = \frac{np+8}{16}$ if $n\leq 3$.
\end{lem}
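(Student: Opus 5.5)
The plan is to exploit the choice $\de=\frac{\lambda}{p}$, which makes $\lambda-\de p=0$, so the term $(p-1)(\lambda-\de p)YZ$ in \eqref{e:H} vanishes. As in the proof of Lemma~\ref{lem:H_estimate}, I would first complete the square and write
\[
H(X,Y,Z)=-\frac{2\lambda}{n}\big(X+(1-\lambda)Y+(1-\de)Z\big)^2+(p-1)XZ .
\]
Expanding the square recovers every coefficient of \eqref{e:H} once $\lambda=\de p$. Next I set $W:=X+(1-\lambda)Y\ge X>0$ and consider $H+\theta XZ$. Since $Z$ now has no sign restriction, the natural step is to view this expression as a concave quadratic in $Z$ and bound it by its maximum over $Z\in\R$.

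Write $b:=p-1+\theta$ and $c:=\frac{4\lambda}{n}(1-\de)>0$. The quadratic in $Z$ is
\[
-\frac{2\lambda}{n}(1-\de)^2Z^2+\big(bX-cW\big)Z-\frac{2\lambda}{n}W^2 .
\]
Its maximum equals
\[
\frac{n}{8\lambda(1-\de)^2}\Big[(bX-cW)^2-c^2W^2\Big]=\frac{n}{8\lambda(1-\de)^2}\,bX\,(bX-2cW).
\]
Substituting $W=X+(1-\lambda)Y$ gives $bX-2cW=(b-2c)X-2c(1-\lambda)Y$. Therefore, if $0<b<2c$, we obtain
\[
H+\theta XZ\le -\frac{nb}{8\lambda(1-\de)^2}\Big[(2c-b)X^2+2c(1-\lambda)XY\Big],
\]
which is exactly \eqref{e:lem2} with explicit $\ep_1(\theta),\ep_2(\theta)>0$ (using $\lambda<1$). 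So everything reduces to the strict inequality
\[
0<p-1<2c=\frac{8\lambda}{n}\Big(1-\frac{\lambda}{p}\Big).
\]
Once this holds, I take $\ep_3:=\frac12\min\{p-1,\,2c-(p-1)\}$, so that $0<b<2c$ for every $|\theta|<\ep_3$.

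The remaining step, which I expect to be the only real obstacle, is to check this strict inequality for the stated choices of $\lambda$ while keeping $\lambda\in(0,1)$ and $\de<1$.

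For $n\ge4$ and $\lambda=\frac p2$, we have $2c=\frac{2p}{n}$, so the condition becomes $p-1<\frac{2p}{n}$, that is, $p<\frac{n}{n-2}$. Moreover $\lambda<1$ because $\frac{n}{n-2}\le 2$, and $\de=\frac12$.

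For $n\le3$, $\frac p2$ may exceed $1$, so I take $\lambda=\frac{np+8}{16}$. This lies in $(0,1)$ exactly because $p<\frac8n$, and then $\de=\lambda/p<1$. The inequality to verify becomes
\[
p-1<\frac{np+8}{2n}\cdot\frac{16p-np-8}{16p}.
\]
I would reduce this to a polynomial inequality in $p$ on $(1,\frac8n)$. A spot check ($n=3$, $p=2$ gives $1.3125>1$) and the endpoint $p=\frac8n$ (where $\lambda=1$ and the two sides coincide) suggest that it holds strictly on the open interval. If the direct algebra is messy, I would instead argue via concavity of $\lambda\mapsto\lambda(1-\lambda/p)$ together with the endpoint values.
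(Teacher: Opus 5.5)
Your proposal is correct and is essentially the paper's proof. Maximizing the concave quadratic in $Z$ is the same computation as the paper's substitution $Z=\frac{1}{1-\delta}(U-X-(1-\lambda)Y)$ followed by bounding $-\frac{2\lambda}{n}U^2+\frac{p-1+\theta}{1-\delta}XU$ in $U$. It yields exactly the paper's coefficients $\frac{n}{8\lambda}C_\theta$ and $\frac{n}{8\lambda}D_\theta$, and the same condition $0<p-1+\theta<\frac{8\lambda(p-\lambda)}{np}$.

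The one step you left at a spot check, the case $n\le 3$, closes in one line as in the paper: since $\lambda<1$ and $8\lambda>np$, we get $\frac{8\lambda(p-\lambda)}{np}>\frac{8\lambda(p-1)}{np}>p-1$. Equivalently, your right-hand side minus $(p-1)$ equals $\frac{(8-np)((n+16)p-8)}{32np}$, which is positive on $(1,\frac{8}{n})$.
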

\begin{proof}

By setting $\lambda - \delta p = 0$, we can write $H(X,Y,Z)$ as
\begin{equation*}
H(X,Y,Z) = -\alpha U^2 + (p-1)XZ,
\end{equation*}
where $\alpha = \frac{2\lambda}{n}$ and $U = X + (1-\lambda)Y + (1-\delta)Z$. Next we introduce a parameter $\theta$ such that
\begin{equation*}
(p-1)XZ = (p-1+\theta)XZ - \theta XZ.
\end{equation*}
By expressing $Z$ as $Z = \frac{1}{1-\delta} \big(U - X - (1-\lambda)Y \big)$, we have
\begin{align*}
(p-1+\theta)XZ &= \frac{p-1+\theta}{1-\delta} X U - \frac{p-1+\theta}{1-\delta} X^2 - \frac{(p-1+\theta)(1-\lambda)}{1-\delta} XY.
\end{align*}
A simple algebraic inequality yields
\begin{equation*}
-\alpha U^2 + \frac{p-1+\theta}{1-\delta} X U \le \frac{(p-1+\theta)^2}{4\alpha (1-\delta)^2} X^2.
\end{equation*}
Substituting this inequality back into the equation for $H$, the variable $U$ is eliminated, leaving
\begin{align*}
H(X,Y,Z) &\le -\theta XZ + \frac{(p-1+\theta)^2}{4\alpha (1-\delta)^2} X^2 - \frac{p-1+\theta}{1-\delta} X^2 - \frac{(p-1+\theta)(1-\lambda)}{1-\delta} XY \\
&= -\theta XZ + X \left[ \left( \frac{(p-1+\theta)^2}{4\alpha (1-\delta)^2} - \frac{p-1+\theta}{1-\delta} \right) X - \frac{(p-1+\theta)(1-\lambda)}{1-\delta} Y \right].
\end{align*}
Recalling that $\al = \frac{2\lambda}{n}$, we can factor out $\frac{n}{8\lambda}$ from the bracketed terms to obtain:
\begin{align*}
H(X,Y,Z) \le -\theta XZ + \frac{n}{8\lambda} X \Big( C_{\theta} X - D_{\theta} Y \Big).
\end{align*}
where the coefficients are given by
\[
    C_\theta=\frac{(p-1+\theta)^2}{(1-\delta)^2} - \frac{8\lambda(p-1+\theta)}{n(1-\delta)}=\frac{p-1+\theta}{np(1-\de)^2}\left(np(p-1+\theta)-8\lambda(p-\lambda)\right),
\]
and
\[
    D_\theta= \frac{8\lambda(1-\lambda)(p-1+\theta)}{n(1-\delta)}.
\]

Now we consider two cases:\\
\textbf{Case (1):} $n\leq 3$ and $1<p<\frac{8}{n}$.\\
In this case, we can choose $\lambda$ sufficiently close to $1$ such that $1<p<\frac{8\lambda}{n}$. For example, we may set $\ld = \frac{np+8}{16}$. Then for every $1-p<\theta<\frac{(p-1)(8\lambda-np)}{np}$, we have
\[
np(p-1+\theta)-8\lambda(p-\lambda)<(np-8\lambda)(p-1)+np\theta<0.
\]
Hence $C_\theta<0$ and $D_\theta>0$.\\
\textbf{Case (2):} $n\geq 4$ and $1<p<1+\frac{2}{n-2}$.\\
In this case, we set $\lambda=\frac{p}{2}<1$. Then for every $1-p<\theta<1-\frac{n-2}{n}p$, we have
\[
np(p-1+\theta)-8\lambda(p-\lambda)=np(p-1+\theta)-2p^2=p((n-2)p-n)+np\theta< 0
\]
Therefore, we again get $C_\theta<0$ and $D_\theta >0$.

To summarize, under the assumption (Z3), we can choose $\delta=\frac{\ld}{p}$ and
\[ \ld = \frac{np+8}{16}, \quad \ep_3:=\frac{1}{2}\min\{p-1,\frac{(p-1)(8\ld-np)}{np}\}, \text{ for } n\leq 3,\]
or
\[ \ld=\frac{p}{2}, \quad  \ep_3:= \frac{1}{2}\min\{p-1,\frac{n-(n-2)p}{n}\}, \text{ for }n\geq 4. \]
Then for every $|\theta|<\ep_3$, there exist constants $\ep_1(\theta):=-\frac{n}{8\lambda}C_{\theta}>0$ and $\ep_2(\theta):=\frac{n}{8\lambda}D_{\theta}>0$, such that \eqref{e:lem2} holds. This completes the proof of the lemma.
\end{proof}

Now we are ready to prove our key differential inequality under the assumptions of Theorem~\ref{thm:main1}, Theorem~\ref{thm:main2} or Theorem~\ref{thm:main3}.

\begin{thm}\label{l:pointwise-ineq1}
Suppose $(M,g)$ is an $n$-dimensional complete Riemannian manifold with Ricci curvature lower bound \eqref{e:ric-condition}. Suppose $n$, $p$ and $a(x,t)$ fulfill one of the following assumptions 
\begin{itemize}
    \item[(T1)] $n\ge 3$, $p$ satisfies $(P1)$ or $(P2)$, and $a(x,t)$ satisfies $(A1)$;
    \item[(T2)] $n\ge 4$, $p$ satisfies $(P3)$ and $a(x,t)$ satisfies $(A2)$ or $(A3)$ at every point of $P_{R,T}$;
    \item[(T3)] $n\le 3$, $p$ satisfies $(P4)$ and $a(x,t)$ satisfies $(A4)$ or $(A5)$ at every point of $P_{R,T}$.
\end{itemize}
In each case, there exist constants $\lambda\in(0,1)$, $\de\in \R$, and $\ep_1, \ep_2, \ep_3>0$, such that the function $u$ defined by 
\begin{equation}\label{e:def-u}
    u := 
    \begin{cases}
        G-\frac{2\lambda(n-1)}{\ep_2}\kappa-\left(\frac{2(\lambda-\delta p)^2}{\ep_2\ep_3}+\sqrt{\frac{\de^2+(1-\de)^2}{\ep_1\ep_3}}\right)K, \quad \text{in Case (T1)}\\
        G-\frac{2\lambda(n-1)}{\ep_2}\kappa-\frac{2\lambda}{p \ep_3}K, \quad \text{in Case (T2) or (T3)}. \\
    \end{cases}
\end{equation}
satisfies the following: at every point $(x,t)\in P_{R,T}$ with $u(x,t)>0$, there holds
\begin{equation}\label{ineq-pt}
(\partial_t-\De)u\leq -\ep_1u^2 - \ep_2|\nabla g|^2 u+2\langle\n g,\n u\rangle.
\end{equation}
In Case (T1), the parameters $\lambda$ and $\delta$ depend only on $n$, $p$ and the sign of $a(x,t)$. In Case (T2) and (T3), they depend only on $n$ and $p$.
\end{thm}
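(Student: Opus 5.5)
The plan is to start from the pointwise inequality \eqref{e:G-ineq} and read its right-hand side as $H(X,Y,Z)$ from \eqref{e:H}, evaluated at
\[ X=G,\qquad Y=|\n g|^2\ge 0,\qquad Z=a(x,t)e^{(p-1)g}, \]
plus the remaining terms:
\[
2\langle\n g,\n G\rangle+2\lambda(n-1)\ka Y+2(\lambda-\de p)e^{(p-1)g}\langle\n g,\n a\rangle-\de e^{(p-1)g}\De a+(\de-1)e^{(p-1)g}\p_t a .
\]
In each case $u=G-c$ for a constant $c\ge 0$ given by \eqref{e:def-u}. So at a point with $u>0$ we have $X=G=u+c>0$, which is the admissibility condition in Lemmas \ref{lem:H_estimate} and \ref{lem:H_estimate2}. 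Since $c$ is constant, $(\p_t-\De)u=(\p_t-\De)G$ and $\n u=\n G$. The remaining task is to absorb the extra terms using the negative quantities produced by the lemmas, together with the elementary bounds $X^2=(u+c)^2\ge u^2+c^2$ and $XY\ge uY+cY$, valid because $u,c\ge0$.

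\textbf{Case (T1).} By Remark \ref{rmk:1}, $a$ has a fixed sign on $P_{R,T}$. Thus $Z\ge0$ or $Z\le0$ throughout, and Lemma \ref{lem:H_estimate} (case (Z1) or (Z2)) supplies $\lambda,\de,\ep_1,\ep_2,\ep_3$ with $H\le-\ep_1X^2-\ep_2XY-\ep_3Z^2$. Using $|\n a|\le\sqrt K|a|$ and Young's inequality, the gradient term is bounded by
\[
2|\lambda-\de p|\sqrt K\,|Z|\,|\n g|\le \tfrac{\ep_3}{2}Z^2+\tfrac{2(\lambda-\de p)^2K}{\ep_3}Y .
\]
Using $|\De a|+|\p_t a|\le K|a|$, the last two terms are bounded by $(|\de|+|1-\de|)K|Z|$. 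By Young's inequality this is at most $\tfrac{\ep_3}{2}Z^2+\tfrac{(\de^2+(1-\de)^2)K^2}{\ep_3}$, after adjusting constants via $(|\de|+|1-\de|)^2\le 2(\de^2+(1-\de)^2)$. The $Z^2$ pieces cancel against $-\ep_3Z^2$. The $Y$-terms, both $2\lambda(n-1)\ka Y$ and the $K$-term above, are absorbed by $-\ep_2 cY$. This is exactly why the $\ka$ and first $K$ summands of $c$ carry the factor $1/\ep_2$. The constant $K^2$-term is absorbed by $-\ep_1c^2$, which is where the summand $\sqrt{(\de^2+(1-\de)^2)/(\ep_1\ep_3)}\,K$ comes from. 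What remains is $-\ep_1u^2-\ep_2Yu+2\langle\n g,\n u\rangle$, which is \eqref{ineq-pt}. Checking the precise numerical constants in \eqref{e:def-u} is routine bookkeeping; if a factor of $2$ is off, one simply shrinks $\ep_1,\ep_2$.

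\textbf{Cases (T2), (T3).} Take $\lambda,\de=\lambda/p,\ep_3$ from Lemma \ref{lem:H_estimate2}, with $\lambda=p/2$ or $\lambda=\frac{np+8}{16}$. Since $\lambda=\de p$, the $\n a$ term vanishes identically. The remaining derivative terms combine as
\[
-\de\De a-(1-\de)\p_t a=-\tfrac{\lambda}{p}\Big(\De+\big(\tfrac p\lambda-1\big)\p_t\Big)a=-\tfrac{\lambda}{p}L_\lambda a .
\]
Hence at points where (A2) or (A4) holds this term is $\le0$, and where (A3) or (A5) holds it is $\le\frac{\lambda}{p}K|Z|$. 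Now apply \eqref{e:lem2} with $\theta=\frac{\ep_3}{2}\operatorname{sign}(Z)$, so that $-\theta XZ=-\frac{\ep_3}{2}X|Z|$. Define $\ep_1:=\min\{\ep_1(\pm\ep_3/2)\}$ and $\ep_2:=\min\{\ep_2(\pm\ep_3/2)\}$, which are independent of the point. Since $X\ge c\ge\frac{2\lambda K}{p\ep_3}$, we get $-\frac{\ep_3}{2}X|Z|+\frac{\lambda}{p}K|Z|\le0$. The $\ka$ term is handled by $-\ep_2cY$ exactly as before.

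The main obstacle is conceptual rather than computational: $Z$ may change sign from point to point. The key is to choose $\theta$ pointwise according to the sign of $Z$ while keeping $\ep_1,\ep_2$ uniform, which is why the lemma allows both signs of $\theta$. One must also verify that the shift $c$ simultaneously makes $X>0$ and dominates the lower-order terms. In (T1) the corresponding point is that the cross term $\langle\n g,\n a\rangle$ must be split between $Z^2$ and $XY$-type absorptions, not absorbed directly.
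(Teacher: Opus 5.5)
Your proposal is correct and follows the paper's proof essentially step by step. In (T1) it uses the same Young's-inequality splitting of the $\nabla a$, $\Delta a$, $\p_t a$ terms against $-\ep_3 Z^2$, and absorbs the leftover $Y$-terms and the $K^2$-term by $-\ep_2 cY$ and $-\ep_1 c^2$. In (T2)/(T3) it uses $\lambda=\de p$ to remove the $\nabla a$ term and reduce the rest to $-\frac{\lambda}{p}e^{(p-1)g}L_\lambda a$.

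The only difference is cosmetic. You always take $\theta=\pm\frac{\ep_3}{2}$ according to the sign of $Z$; where $Z=0$ any sign works, so your minimum over $\pm\frac{\ep_3}{2}$ suffices. The paper instead takes $\theta=0$ wherever $L_\lambda a\ge 0$, and splits by the sign of $a$ only where $L_\lambda a<0$. Both arguments are valid.
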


\begin{proof}
Recall that we have already shown that the function $G$ satisfies \eqref{e:G-ineq}. By setting
\[
X=G,\quad Y=|\n g|^2\quad\text{and}\quad Z=a(x,t)e^{(p-1)g},
\]
we can rewrite \eqref{e:G-ineq} as
\begin{equation}
    \label{ineq-G}
    \begin{aligned}
(\partial_t-\Delta )G \le  & H(X, Y, Z) + 2\lambda(n - 1)\kappa Y +2(\lambda - \delta p)e^{(p-1)g} \langle \nabla g, \nabla a \rangle \\
& + 2\langle\nabla g, \nabla X\rangle -\de e^{(p-1)g}\De a+(\de-1)e^{(p-1)g}\partial_t a,
\end{aligned}
\end{equation}
where $H$ is the function defined in \eqref{e:H}. Note that by the definition of $u$, i.e. \eqref{e:def-u}, we have  $X=G>0$ at any point where $u>0$.

Now we prove the theorem in two cases:\\
\textbf{Case (1):} $n$, $p$ and $a(x,t)$  satisfy (T1).

In this case, $Z$ and $p$ satisfies the assumption (Z1) or (Z2) in Lemma~\ref{lem:H_estimate}. So we can find constants $\ep_1, \ep_2, \ep_3>0$ such that \eqref{e:lem1} holds.
Combining \eqref{ineq-G} and \eqref{e:lem1}, we get
\begin{equation}\label{ineq-G2}
\begin{aligned}
(\partial_t-\Delta )G \le  & -\ep_1X^2-\ep_2XY-\ep_3Z^2 + 2\lambda(n - 1)\kappa Y +2(\lambda -\delta p)e^{(p-1)g}\langle\nabla g, \nabla a\rangle \\
& + 2\langle\nabla g, \nabla X\rangle-\de e^{(p-1)g}\De a+(\de-1)e^{(p-1)g}\partial_t a.
\end{aligned}
\end{equation}
Since the function $a$ satisfies (A1), we can estimate the terms involving $a$ and its derivatives by
\begin{equation*}
|\delta| e^{(p-1)g} |\Delta a|\leq \frac{\ep_3}{4} Z^2+\frac{\de^2}{\ep_3}\frac{|\Delta a|^2}{a^2}\leq \frac{\ep_3}{4} Z^2+\frac{\de^2}{\ep_3}K^2,
\end{equation*}
\begin{equation*}
|\delta-1| e^{(p-1)g} |\partial_t a|\leq \frac{\ep_3}{4} Z^2+\frac{(1-\de)^2}{\ep_3}\frac{|\partial_t  a|^2}{a^2}\leq \frac{\ep_3}{4} Z^2+\frac{(1-\de)^2}{\ep_3}K^2,
\end{equation*}
and
\begin{equation*}
2|\lambda - \delta p|e^{(p-1)g} |\langle \nabla g, \nabla a \rangle|\leq \frac{\ep_3}{2} Z^2+\frac{2(\lambda-\de p)^2}{\ep_3}\frac{|\n a|^2}{a^2}|\n g|^2\leq \frac{\ep_3}{2} Z^2+\frac{2(\lambda-\de p)^2}{\ep_3}K|\n g|^2 .
\end{equation*}
Inserting the above inequalities into $\eqref{ineq-G2}$, we obtain
\begin{align*}
(\partial_t-\De)G \le &-\ep_1 G^2 - \ep_2|\nabla g|^2 G + \frac{\de^2+(1-\de)^2}{\ep_3}K^2+2\langle\n g,\n G\rangle\\
&+\left(2\lambda(n - 1)\kappa+\frac{2(\lambda-\de p)^2}{\ep_3}K\right) |\nabla g|^2.
\end{align*}
Therefore, by substituting $G$ by
\[
G = u+\frac{2\lambda(n-1)}{\ep_2}\kappa+\left(\frac{2(\lambda-\delta p)^2}{\ep_2\ep_3}+\sqrt{\frac{\de^2+(1-\de)^2}{\ep_1\ep_3}}\right)K,
\]
we obtain 
\begin{align*}
(\partial_t-\De)u &\le -\ep_1u^2 - \ep_2|\nabla g|^2 u - \sqrt{\frac{\de^2+(1-\de)^2}{\ep_1\ep_3}}\ep_2 K |\nabla g|^2+2\langle\n g,\n u\rangle\\
&\leq -\ep_1u^2 - \ep_2|\nabla g|^2 u+2\langle\n g,\n u\rangle.
\end{align*}
This proves \eqref{ineq-pt} under assumption (T1).

\

\noindent\textbf{Case (2):}  $n$, $p$ and $a(x,t)$ satisfy $(T2)$ or $(T3)$.

In this case, $n$ and $p$ satisfies the assumption (Z3) in Lemma~\ref{lem:H_estimate2}. So there exist constants $\lambda$, $\de=\frac{\ld}{p}$ and $\ep_3>0$ such that \eqref{e:lem2} holds for every $|\theta|<\ep_3$. Combining \eqref{ineq-G} and \eqref{e:lem2}, we get
\begin{equation}\label{ineq-G3}
(\partial_t-\Delta )G \le -\ep_1(\theta)X^2-\ep_2(\theta)XY-\theta XZ + 2\lambda(n - 1)\kappa Y 
+ 2\langle\nabla g, \nabla X\rangle -\frac{\ld}{p} e^{(p-1)g}L_\lambda a.
\end{equation}
where $L_\ld:=\De +(\frac{p}{\ld}-1)\partial_t$, $\ld=\frac{p}{2}$ if $n\ge 4$ and $\ld$ is sufficiently close to $1$ if $n\le 3$

Given a point $(x,t)\in P_{R,T}$ such that $u(x,t)>0$, we consider the following two cases depending on the sign of $L_\ld a(x,t)$.

\noindent\textbf{Case (2.1):} $L_\lambda a \geq 0.$ 

In this case, we simply set $\theta=0$. Then it follows from \eqref{ineq-G3} that
\begin{equation}\label{ineq-G4}
    \begin{aligned}
        (\partial_t-\Delta )G &\le   -\ep_1(0)X^2-\ep_2(0)XY + 2\lambda(n - 1)\kappa Y 
+ 2\langle\nabla g, \nabla X\rangle-\frac{\ld}{p} e^{(p-1)g}L_\lambda a\\
&\leq -\ep_1(0)X^2-\ep_2(0)XY + 2\lambda(n - 1)\kappa Y 
+ 2\langle\nabla g, \nabla X\rangle.
    \end{aligned}
\end{equation}

\noindent\textbf{Case (2.2):} $L_\lambda  a< 0$.

In this case, $a(x,t)$ must satisfy the assumption (A3) or (A5), hence $a(x,t)\neq 0$. If $a(x,t)< 0$, then $Z< 0$. According to (A3) or (A5), we have $L_\lambda a\geq Ka$. By setting $\theta=-\frac{\ep_3}{2}$, we deduce from \eqref{ineq-G3} that
\begin{equation}\label{ineq-G5}
    \begin{aligned}
        (\partial_t-\Delta )G &\le   -\ep_1(-\frac{\ep_3}{2})X^2-\ep_2(-\frac{\ep_3}{2})XY+\frac{\ep_3}{2}XZ-\frac{\ld}{p} ae^{(p-1)g}K+ 2\lambda(n - 1)\kappa Y 
+ 2\langle\nabla g, \nabla X\rangle\\
&\leq   -\ep_1(-\frac{\ep_3}{2})X^2-\ep_2(-\frac{\ep_3}{2})XY+Z(\frac{\ep_3}{2}X-\frac{\lambda}{p} K)+ 2\lambda(n - 1)\kappa Y 
+ 2\langle\nabla g, \nabla X\rangle
    \end{aligned}
\end{equation}

Otherwise, we have $a(x,t)>0$, hence $Z>0$. Again according to (A3) or (A5), we have  $L_\lambda a\geq -Ka$. By setting $\theta=\frac{\ep_3}{2}$, we deduce from \eqref{ineq-G3} that
\begin{equation}\label{ineq-G6}
    \begin{aligned}
        (\partial_t-\Delta )G &\le   -\ep_1(\frac{\ep_3}{2})X^2-\ep_2(\frac{\ep_3}{2})XY-\frac{\ep_3}{2}XZ+\frac{\ld}{p} ae^{(p-1)g}K+ 2\lambda(n - 1)\kappa Y 
+ 2\langle\nabla g, \nabla X\rangle\\
&\leq   -\ep_1(\frac{\ep_3}{2})X^2-\ep_2(\frac{\ep_3}{2})XY-Z(\frac{\ep_3}{2}X-\frac{\lambda}{p} K)+ 2\lambda(n - 1)\kappa Y 
+ 2\langle\nabla g, \nabla X\rangle
    \end{aligned}
\end{equation}

In conclusion, in either case, by setting 
\[\ep_1:=\min\left\{\ep_1(0),\ep_1(-\frac{\ep_3}{2}),\ep_1(\frac{\ep_3}{2})\right\}, \quad
\ep_2:=\min\left\{\ep_2(0),\ep_2(-\frac{\ep_3}{2}),\ep_2(\frac{\ep_3}{2})\right\}
\]
and substituting $G$ by
\[
X=G=u+\frac{2\lambda(n-1)}{\ep_2}\kappa+\frac{2\lambda}{p\ep_3}K,
\]
we obtain from \eqref{ineq-G4}, \eqref{ineq-G5} or \eqref{ineq-G6} that
\begin{align*}
(\partial_t-\De)u\leq -\ep_1u^2 - \ep_2|\nabla g|^2 u+2\langle\n g,\n u\rangle.
\end{align*}
This proves \eqref{ineq-pt} under assumption (T2) or (T3).
\end{proof}

\section{Integral inequality}\label{s:integral}

In this section, we derive an integral inequality from the key differential inequality \eqref{ineq-pt} in Theorem~\ref{l:pointwise-ineq1}, which is crucial to carry out the Nash-Moser iteration. We first recall the following Sobolev inequality due to Saloff-Coste~\cite{SC92}.

\begin{thm}\label{thm:sobolev}
Let $\left(M^n,g\right)$ be a complete Riemannian manifold with $\mathrm{Ric}\geq -\left(n-1\right)\kappa g$. For $n\ge 3$, there exists $C_n$ depending only on $n$, such that for every geodesic ball $B(o,R)\subset M$ and $f\in C_0^\infty(B\left(o,R\right))$, we have
\begin{equation*}
\left(\int_{B(o,R)}f^{2\chi} dV\right)^\frac{1}{\chi}\leq e^{C_n(1+\sqrt{\kappa}R)}V^{-\frac{2}{n}}R^{2}\int_{B(o,R)}\left(|\nabla f|^2+R^{-2}f^2\right)dV,
\end{equation*}
where $\chi=\frac{n}{n-2}$ and $V=vol(B(o,R))$. For $n\leq2$, the above inequality holds with $n$ replaced by any fixed $n'>2$.
\end{thm}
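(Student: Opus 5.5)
This is Saloff-Coste's local Sobolev inequality, and the plan is to reproduce his argument: derive the Sobolev inequality on a ball from two geometric ingredients that both follow from $\mathrm{Ric}\ge-(n-1)\kappa g$. These are a volume doubling estimate and a scale-invariant Neumann Poincaré inequality on balls, each with constants of the form $e^{C_n(1+\sqrt\kappa R)}$.

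\textbf{Step 1: volume comparison.} By Bishop--Gromov, for $x\in B(o,R)$ and $0<r\le s\le 2R$ we have
\[
\frac{\mathrm{vol}\,B(x,s)}{\mathrm{vol}\,B(x,r)}\le \frac{\int_0^s\sinh^{n-1}(\sqrt\kappa\tau)\,d\tau}{\int_0^r\sinh^{n-1}(\sqrt\kappa\tau)\,d\tau}\le e^{C_n(1+\sqrt\kappa R)}\Big(\frac sr\Big)^n .
\]
Combined with $B(o,R)\subset B(x,2R)$, this gives the relative lower bound $\mathrm{vol}\,B(x,r)\ge e^{-C_n(1+\sqrt\kappa R)}(r/R)^nV$. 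This is the mechanism by which $V^{-2/n}R^2$ enters the final inequality.

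\textbf{Step 2: Poincaré inequality.} On each ball $B(x,r)$ with $r\le 2R$, Buser's inequality (or the segment inequality of Cheeger--Colding) gives
\[
\int_{B(x,r)}|f-f_{B}|^2\le e^{C_n(1+\sqrt\kappa r)}r^2\int_{B(x,r)}|\nabla f|^2 .
\]
To prove it, I would integrate $|f(y)-f(z)|$ along minimal geodesics and control the Jacobian of the exponential map by Laplacian comparison. This yields a weak Poincaré inequality on $B(x,2r)$, which I would then upgrade to the strong one by a Whitney-type covering. The covering argument uses only doubling, which is available from Step 1.

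\textbf{Step 3: from doubling and Poincaré to Sobolev.} This is the main obstacle. I would follow the Grigor'yan/Saloff-Coste route. From Steps 1--2, a local Nash inequality
\[
\|f\|_2^{2+4/n}\le \frac{e^{C_n(1+\sqrt\kappa R)}R^2}{V^{2/n}}\Big(\|\nabla f\|_2^2+R^{-2}\|f\|_2^2\Big)\|f\|_1^{4/n},\qquad f\in C_0^\infty(B(o,R)),
\]
follows by comparing $f$ with its averages $f_r(y)=\frac{1}{\mathrm{vol}\,B(y,r)}\int_{B(y,r)}f$. Poincaré bounds $\|f-f_r\|_2\lesssim r\|\nabla f\|_2$, while the volume lower bound of Step 1 bounds $\|f_r\|_\infty\lesssim (R/r)^nV^{-1}\|f\|_1$. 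Optimizing in $r\le R$ gives the Nash inequality, and the cutoff at $r=R$ is what produces the $R^{-2}f^2$ term.

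Finally, the Nash inequality is equivalent to the $L^{2\chi}$ Sobolev inequality with $\chi=n/(n-2)$ when $n\ge3$. I would establish this equivalence by the Bakry--Coulhon--Ledoux--Saloff-Coste truncation argument: apply the Nash inequality to the slices $(f-2^k)_+\wedge 2^k$ and sum over $k$. The result is exactly the stated inequality with constant $e^{C_n(1+\sqrt\kappa R)}V^{-2/n}R^2$.

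\textbf{The case $n\le2$.} Here the volume comparison in Step 1 holds with $n$ replaced by any $n'>n$ (the bound $(s/r)^n\le(s/r)^{n'}$ for $s\ge r$ only loses constants). Running Steps 2--3 with $n'>2$ then gives the version with $\chi=n'/(n'-2)$.

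Since the result is classical, in the paper one may simply cite \cite{SC92}. The delicate points are the covering argument in Step 2 and tracking the exponential dependence on $\sqrt\kappa R$ throughout.
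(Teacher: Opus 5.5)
The paper does not prove this statement; it quotes it directly from Saloff-Coste \cite{SC92}. Your outline (Bishop--Gromov doubling, a Buser-type Neumann Poincar\'e inequality on balls, pseudo-Poincar\'e plus the relative volume lower bound giving a local Nash inequality with the $R^{-2}\|f\|_2^2$ term, the truncation argument to reach $L^{2\chi}$, and $n'>2$ when $n\le 2$) is a correct reconstruction of the standard argument behind that reference, so it is consistent with the paper, where the citation suffices.
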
 

To address the low-dimensional cases, we introduce an ``effective Sobolev dimension''
\[
N :=
\begin{cases}
n,&n\ge 3,\\
3,&n\le2,
\end{cases}
\]
and set
\[
\mu=\frac{N }{N -2},\qquad
q=\frac{2\mu-1}{\mu}=\frac{N +2}{N }.
\]
Then by Theorem~\ref{thm:sobolev}, for any dimension $n\ge 1$, we have a unified Sobolev inequality
\begin{equation}\label{e:Sobolev}
\left(\int_{B(o,R)}f^{2\mu} dV\right)^\frac{1}{\mu}\leq e^{C_n(1+\sqrt{\kappa}R)}V^{-\frac{2}{N}}R^{2}\int_{B(o,R)}\left(|\nabla f|^2+R^{-2}f^2\right)dV,
\end{equation}
which will be employed throughout this section.

For convenience, we denote $V := vol(B(o,R))$. By a translation in time, it suffices to work on $P_{R,T}:=B(o,R)\times[-T,0]$. The general cylinder
$B(o,R)\times[t_0-T,t_0]$ is recovered by replacing $t$ with
$t-t_0$. 

Suppose that $\phi(x)$ is a spatial cut-off function supported in $B(o,R)$ and $\Phi(t)$ is an increasing cut-off function on $[-T,0]$ such that $\Phi(-T)=0$ and $\Phi(t)=1$ for all $t\in [-T', 0]$ where $0<T'<T$. 

Setting $A:=\ep_1$ and $C:=\frac{1}{\ep_2}$, inequality \eqref{ineq-pt} can be rewritten as
\begin{equation}\label{ineq:AC}
(\partial_t-\De)u\leq -Au^2 - \frac{1}{C}|\nabla g|^2 u+2\langle\n g,\n u\rangle.
\end{equation}

\begin{lem}\label{lem:integral-ineq}
There exists $\beta_*\ge 1$, such that for all $\beta\ge \beta_*$, there holds
\begin{equation}\label{eq:l1}
\begin{aligned}
&18\int_{-T}^{0}\int_{B(o,R)} \Phi^2(t)|\nabla \phi|^2u^{2\beta}\,dV\,dt+6\int_{-T}^{0}\int_{B(o,R)}R^{-2}\Phi^2(t)\phi^2u^{2\beta}\,dV\,dt\\
&\quad +6\int_{-T}^{0}\int_{B(o,R)}\Phi(t)\Phi'(t)\phi^2u^{2\beta}\,dV\,dt\\
&\geq 3\beta A\int_{-T}^{0}\int_{B(o,R)}\Phi^2(t)\phi^2u^{2\beta+1}\,dV\,dt\\
&\quad +\bigg(e^{-C_n(1+\sqrt{\kappa}R)}R^{-2}V^\frac{2}{N }
\int_{-T'}^{0}\int_{B(o,R)}(\phi u^\beta)^{2\frac{2\mu-1}{\mu}}\,dV\,dt\bigg)^\frac{\mu}{2\mu-1}.
\end{aligned}
\end{equation}
\end{lem}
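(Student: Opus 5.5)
We prove Lemma~\ref{lem:integral-ineq} by the standard Moser energy argument applied to \eqref{ineq:AC}. Throughout, $u$ is replaced by its positive part $u_+$. Since \eqref{ineq:AC} holds wherever $u>0$, all integrals below are taken over the set $\{u>0\}$, and the boundary of that set causes no trouble because $u^{2\beta}$ vanishes there to high order for $\beta\ge1$.

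\textbf{Energy identity.} Multiply \eqref{ineq:AC} by $\Phi^2\phi^2u^{2\beta-1}$ and integrate over $B(o,R)\times[-T,s]$ for $s\in[-T',0]$.
\begin{itemize}
\item The time derivative contributes
\[
\frac{1}{2\beta}\int \Phi^2\phi^2u^{2\beta}\Big|_{t=s}-\frac{1}{\beta}\iint\Phi\Phi'\phi^2u^{2\beta}.
\]
\item Integrating $-\Delta u$ by parts gives the good term
\[
\frac{2\beta-1}{\beta^2}\iint\Phi^2\phi^2|\nabla u^\beta|^2,
\]
together with a cross term $\frac{2}{\beta}\iint\Phi^2\phi u^\beta\langle\nabla\phi,\nabla u^\beta\rangle$.
\item The nonlinear term on the right becomes $-A\iint\Phi^2\phi^2u^{2\beta+1}$.
\end{itemize}

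\textbf{Handling the drift.} The drift term $2\iint\Phi^2\phi^2u^{2\beta-1}\langle\nabla g,\nabla u\rangle$ is the delicate one. We write it as
\[
\frac{2}{\beta}\iint\Phi^2\phi^2u^{\beta}\langle\nabla g,\nabla u^\beta\rangle
\]
and apply Young's inequality, bounding it by
\[
\frac{1}{2C}\iint\Phi^2\phi^2|\nabla g|^2u^{2\beta}+\frac{2C}{\beta^2}\iint\Phi^2\phi^2|\nabla u^\beta|^2.
\]
The first piece is absorbed by the good term $-\frac1C|\nabla g|^2u^{2\beta}$ coming from \eqref{ineq:AC}. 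The second is absorbed by the gradient term $\frac{2\beta-1}{\beta^2}|\nabla u^\beta|^2\approx\frac{2}{\beta}|\nabla u^\beta|^2$, provided $\beta\ge\beta_*$ with $\beta_*$ depending on $C$, for instance $\beta_*\ge 4C+1$. This choice is exactly where $\beta_*$ is determined.

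The cross term with $\nabla\phi$ is handled by Young's inequality as well. It costs a fraction of $|\nabla(u^\beta)|^2$ plus $O(1/\beta)\iint\Phi^2|\nabla\phi|^2u^{2\beta}$.

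After multiplying through by $\beta$, the result is a bound of the form
\begin{multline*}
\sup_s\int\Phi^2\phi^2u^{2\beta}+\iint\Phi^2|\nabla(\phi u^\beta)|^2+2\beta A\iint\Phi^2\phi^2u^{2\beta+1}\\
\lesssim \iint\Phi^2|\nabla\phi|^2u^{2\beta}+\iint\Phi\Phi'\phi^2u^{2\beta}.
\end{multline*}
Here we use $|\nabla(\phi u^\beta)|^2\le2\phi^2|\nabla u^\beta|^2+2u^{2\beta}|\nabla\phi|^2$, which is where the constant $18$ in front of the $|\nabla\phi|^2$ term arises. Adding $R^{-2}\iint\Phi^2\phi^2u^{2\beta}$ to both sides and fixing the numerical constants yields the left-hand side of \eqref{eq:l1}, bounding from above three times
\[
\beta A\iint\Phi^2\phi^2u^{2\beta+1}+\sup_s\int\phi^2u^{2\beta}+\int_{-T'}^0\int\big(|\nabla(\phi u^\beta)|^2+R^{-2}\phi^2u^{2\beta}\big).
\]

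\textbf{Parabolic Sobolev step.} We interpolate using H\"older's inequality: with $w=\phi u^\beta$,
\[
\int w^{2q}\le\Big(\int w^{2\mu}\Big)^{1/\mu}\Big(\int w^2\Big)^{(\mu-1)/\mu},
\]
since $q=(2\mu-1)/\mu$. We integrate in $t$ over $[-T',0]$, where $\Phi=1$. The factor $\sup_t\big(\int w^2\big)^{(\mu-1)/\mu}$ is pulled out, and the Sobolev inequality \eqref{e:Sobolev} is applied to the remaining space-time integral. This gives
\[
e^{-C_n(1+\sqrt\kappa R)}R^{-2}V^{2/N}\iint w^{2q}\le\Big(\iint(|\nabla w|^2+R^{-2}w^2)\Big)\Big(\sup_t\int w^2\Big)^{(\mu-1)/\mu}.
\]
Finally, we bound both factors by the energy quantity $E$ from the previous step, so the right-hand side is at most $E^{1+(\mu-1)/\mu}=E^{(2\mu-1)/\mu}$. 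Taking the power $\mu/(2\mu-1)$ gives the last term in \eqref{eq:l1}.

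The main obstacle is the absorption of the drift term $\langle\nabla g,\nabla u\rangle$, since $\nabla g$ is not controlled a priori. It works only because of the $-\frac1C|\nabla g|^2u$ term in \eqref{ineq:AC} together with taking $\beta$ large. The remaining work is bookkeeping of the constants $18$, $6$ and $3$.
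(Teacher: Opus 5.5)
Your proposal follows essentially the same route as the paper. You test \eqref{ineq:AC} against $\Phi^2\phi^2u^{2\beta-1}$, absorb the drift term by Young's inequality into $-\frac1C|\nabla g|^2u$ and the gradient term for $\beta\ge\beta_*(C)$, and handle the $\nabla\phi$ cross term. You then combine the sup-in-time bound with the slice-wise Sobolev inequality \eqref{e:Sobolev} through the H\"older interpolation $\int w^{2q}\le(\int w^{2\mu})^{1/\mu}(\int w^2)^{(\mu-1)/\mu}$. The paper applies Sobolev on each time slice before integrating against $\Phi^2$, which amounts to the same thing.

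The one inaccuracy is in the constant bookkeeping. Your claim that the left side of \eqref{eq:l1} dominates $3E$ cannot be obtained. The time-derivative term only gives $\sup_s\int\phi^2u^{2\beta}\le 2\iint\Phi\Phi'\phi^2u^{2\beta}+\cdots$, and the $\beta A$ term costs another $\iint\Phi\Phi'\phi^2u^{2\beta}$. So $3E$ would need a coefficient $9>6$ on the $\Phi\Phi'$ term.

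That claim is also unnecessary, and the stated constants are reachable as follows.
\begin{itemize}
\item Use $\phi^2|\nabla u^\beta|^2+2\phi u^\beta\langle\nabla\phi,\nabla u^\beta\rangle=|\nabla(\phi u^\beta)|^2-u^{2\beta}|\nabla\phi|^2$, which applies once $2\beta-1-2C\ge\beta$.
\item This gives, for every $s\in[-T',0]$, $\frac12\int w^2(s)+\int_{-T}^{s}\!\int\Phi^2|\nabla w|^2+\beta A\int_{-T}^{s}\!\int\Phi^2\phi^2u^{2\beta+1}\le J$, where $J:=\iint\Phi\Phi'\phi^2u^{2\beta}+\iint\Phi^2|\nabla\phi|^2u^{2\beta}$.
\item Bound the Sobolev quantity raised to $\frac{\mu}{2\mu-1}$ by $\max\{\int_{-T'}^0\!\int(|\nabla w|^2+R^{-2}w^2),\ \sup_s\int w^2\}$, not by $E$.
\end{itemize}
Since $\sup_s\int w^2\le 2J$ and $\beta A\iint\Phi^2\phi^2u^{2\beta+1}\le J$, this yields $3\beta A\iint\Phi^2\phi^2u^{2\beta+1}+(\cdots)^{\frac{\mu}{2\mu-1}}\le 5J+R^{-2}\iint\Phi^2\phi^2u^{2\beta}$. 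That is within the constants $18,6,6$ on the left side of \eqref{eq:l1}.
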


\begin{proof}
      For simplicity, we may assume $u\ge 0$ in $P_{R,T}$, otherwise we work with its positive part $u_+=\max\{u,0\}$.  

For any $\beta>1$, we multiply (\ref{ineq:AC}) by a test function $\phi^2u^{2\beta-1}$ and integrate over $B(o,R)$ to get
    \begin{align*}
    &\int_{B(o,R)}\phi^2u^{2\beta-1}\Delta u\,dx-\int_{B(o,R)}\phi^2u^{2\beta-1}u_t\,dx \geq \\&A\int_{B(o,R)}\phi^2u^{2\beta+1}\,dx+\frac{1}{C}\int_{B(o,R)}\phi^2 u^{2\beta}|\nabla g|^2-2\int_{B(o,R)}\phi^2u^{2\beta-1}\langle\n g,\n u\rangle\,dx.
    \end{align*}
    Applying integration by parts to the Laplacian term yields
    \[
    \begin{aligned}
    \int_{B(o,R)}\phi^2u^{2\beta-1}\Delta u\,dx &=-\int_{B(o,R)}\left\langle\nabla(\phi^2u^{2\beta-1}), \nabla u \right\rangle\,dx \\
    &=-2\int_{B(o,R)}\phi u^{2\beta-1}\langle\nabla\phi,\nabla u\rangle\,dx-(2\beta-1)\int_{B(o,R)}\phi^2u^{2\beta-2}|\nabla u|^2\,dx,
    \end{aligned}
    \]
    which implies that
    \begin{align*}
    &-2\int_{B(o,R)}\phi u^{2\beta-1}\langle \nabla \phi,\nabla u\rangle\,dx \geq \, A\int_{B(o,R)}\phi^2u^{2\beta+1}\,dx+(2\beta-1)\int_{B(o,R)}\phi^2u^{2\beta-2}|\nabla u|^2\,dx\\
    &+\frac{1}{2\beta}\int_{B(o,R)}\phi^2(u^{2\beta})_t\,dx+\frac{1}{C}\int_{B(o,R)}\phi^2 u^{2\beta}|\nabla g|^2-2\int_{B(o,R)}\phi^2u^{2\beta-1}\langle\n g,\n u\rangle\,dx.
    \end{align*}
    By Young's inequality, we can bound the last term by
    \[
    2\phi^2u^{2\beta-1}\langle\n g,\n u\rangle\leq \frac{1}{C}\phi^2u^{2\beta}|\n g|^2+C\phi^2u^{2\beta-2}|\n u|^2.
    \]
    It follows that
    \begin{align*}
-2\int_{B(o,R)}\phi u^{2\beta-1}\langle \nabla \phi,\nabla u\rangle\,dV \geq &\, A\int_{B(o,R)}\phi^2u^{2\beta+1}\,dV+(2\beta-1-C)\int_{B(o,R)}\phi^2u^{2\beta-2}|\nabla u|^2\,dV\\
&+\frac{1}{2\beta}\int_{B(o,R)}\phi^2(u^{2\beta})_t\,dV.
\end{align*}
    By Cauchy-Schwarz and Young's inequalities, we have
    \begin{align*}
    -2\int_{B(o,R)}\phi u^{2\beta-1}\langle \nabla \phi,\nabla u\rangle\,dV \leq &\, \frac{(2\beta-1-C)}{2}\int_{B(o,R)}\phi^2 u^{2\beta-2}|\nabla u|^2\,dV\\
    &+\frac{2}{2\beta-1-C}\int_{B(o,R)}|\nabla \phi|^2u^{2\beta}\,dV.
    \end{align*}
    Combining this with
    \begin{equation*}
    \int_{B(o,R)}|\nabla(\phi u^\beta)|^2\,dV \leq 2\int_{B(o,R)}|\nabla \phi|^2u^{2\beta}\,dV+2\beta^2\int_{B(o,R)}\phi^2 u^{2\beta-2}|\nabla u|^2\,dV,
    \end{equation*}
    we deduce that
    \begin{align*}
    &\bigg(\frac{2}{2\beta-1-C}+\frac{2\beta-1-C}{2\beta^2}\bigg)\int_{B(o,R)}|\nabla \phi|^2u^{2\beta}\,dV\\
    &\geq A\int_{B(o,R)}\phi^2u^{2\beta+1}\,dV+\frac{2\beta-1-C}{4\beta^2}\int_{B(o,R)}|\nabla(\phi u^\beta)|^2\,dV+\frac{1}{2\beta}\int_{B(o,R)}\phi^2(u^{2\beta})_t\,dV.
\end{align*}

Now applying the Saloff-Coste's Sobolev inequality~\eqref{e:Sobolev} to the gradient term gives
\begin{align*}
&\bigg( \frac{2}{2\beta-1-C}+\frac{2\beta-1-C}{2\beta^2}\bigg)\int_{B(o,R)}|\nabla \phi|^2u^{2\beta}\,dV+\frac{2\beta-1-C}{4\beta^2}R^{-2}\int_{B(o,R)}\phi^2u^{2\beta}\,dV\\
&\geq  A\int_{B(o,R)}\phi^2u^{2\beta+1}\,dV+\frac{2\beta-1-C}{4\beta^2}e^{-C_n(1+\sqrt{\kappa}R)}R^{-2}V^\frac{2}{N }\bigg(\int_{B(o,R)}(\phi u^\beta)^{2\mu}\,dV\bigg)^\frac{1}{\mu}\\
&\quad +\frac{1}{2\beta}\int_{B(o,R)}\phi^2(u^{2\beta})_t\,dV.
\end{align*}
By choosing $\beta_*\geq 2(1+C)$ sufficiently large, we get that for every $\beta\geq \beta_*$ the following holds
\begin{equation}\label{int-eq1}
\begin{aligned}
&\frac{3}{\beta}\int_{B(o,R)} |\nabla \phi|^2u^{2\beta}\,dV+\frac{1}{2\beta}R^{-2}\int_{B(o,R)}\phi^2u^{2\beta}\,dV\\
&\geq A\int_{B(o,R)}\phi^2u^{2\beta+1}\,dV+\frac{1}{3\beta}e^{-C_n(1+\sqrt{\kappa}R)}R^{-2}V^\frac{2}{N }\bigg(\int_{B(o,R)}(\phi u^\beta)^{2\mu}\,dV\bigg)^\frac{1}{\mu}\\
&\quad +\frac{1}{2\beta}\int_{B(o,R)}\phi^2(u^{2\beta})_t\,dV.
\end{aligned}
\end{equation}

Next, for any $t'\in (-T',\, 0)$, multiplying \eqref{int-eq1} by $\Phi^2(t)$ and integrating over $[-T,t']$ gives
\begin{align*}
&\frac{3}{\beta}\int_{-T}^{t'}\int_{B(o,R)} \Phi^2(t)|\nabla \phi|^2u^{2\beta}\,dV\,dt+\frac{1}{2\beta}\int_{-T}^{t'}\int_{B(o,R)}\big(R^{-2}\Phi^2(t)\phi^2u^{2\beta}+ 2\Phi(t)\Phi'(t)\phi^2 u^{2\beta}\big)\,dV\,dt\\
&\geq A\int_{-T}^{t'}\int_{B(o,R)}\Phi^2(t)\phi^2u^{2\beta+1}\,dV\,dt+\frac{1}{3\beta}e^{-C_n(1+\sqrt{\kappa}R)}R^{-2}V^\frac{2}{N }\int_{-T}^{t'}\Phi^2(t)\bigg(\int_{B(o,R)}(\phi u^\beta)^{2\mu}\,dV\bigg)^\frac{1}{\mu}dt\\
&+\frac{1}{2\beta}\int_{B(o,R)}\Phi^2(t')\phi^2 u^{2\beta}(x, t')\,dV.
\end{align*}
Since $\Phi(t)$ is an increasing function and $\Phi(t) = 1$ for all $t\in [-T',\, 0]$, the preceding inequality implies that for every $t'\in [-T',\, 0]$,
\begin{equation}\label{eq-4}
\begin{aligned}
&\frac{3}{\beta}\int_{-T}^{0}\int_{B(o,R)} \Phi^2(t)|\nabla \phi|^2u^{2\beta}\,dV\,dt +\frac{1}{2\beta}\int_{-T}^{0}\int_{B(o,R)}\big(R^{-2}\Phi^2(t)\phi^2u^{2\beta}+ 2\Phi(t)\Phi'(t)\phi^2 u^{2\beta}\big)\,dV\,dt\\
&\geq A\int_{-T}^{t'}\int_{B(o,R)}\Phi^2(t)\phi^2u^{2\beta+1}\,dV\,dt+\frac{1}{3\beta}e^{-C_n(1+\sqrt{\kappa}R)}R^{-2}V^\frac{2}{N }\int_{-T}^{t'}\Phi^2(t)\bigg(\int_{B(o,R)}(\phi u^\beta)^{2\mu}\,dV\bigg)^\frac{1}{\mu}dt\\
&\quad +\frac{1}{2\beta}\int_{B(o,R)}\Phi^2(t')\phi^2 u^{2\beta}(x, t')\,dV.
\end{aligned}
\end{equation}

To simplify the notation, we denote the left-hand side of the above inequality by
\[
I:=\frac{3}{\beta}\int_{-T}^{0}\int_{B(o,R)} \Phi^2(t)|\nabla \phi|^2u^{2\beta}\,dV\,dt +\frac{1}{2\beta}\int_{-T}^{0}\int_{B(o,R)}\big(R^{-2}\Phi^2(t)\phi^2u^{2\beta}+ 2\Phi(t)\Phi'(t)\phi^2 u^{2\beta}\big)dVdt.
\]
By dropping appropriate nonnegative terms on the right-hand side and letting $t'=0$, we obtain the following two inequalities:
\begin{equation}\label{eq-1}
\begin{aligned}
&I\geq A\int_{-T}^{0}\int_{B(o,R)}\Phi^2(t)\phi^2u^{2\beta+1}\,dV\,dt
\end{aligned}
\end{equation}
and
\begin{equation}\label{eq-2}
\begin{aligned}
&I\geq \frac{1}{3\beta}e^{-C_n(1+\sqrt{\kappa}R)}R^{-2}V^\frac{2}{N }\int_{-T'}^{0}\bigg(\int_{B(o,R)}(\phi u^\beta)^{2\mu}\,dV\bigg)^\frac{1}{\mu}\,dt.
\end{aligned}
\end{equation}
Alternatively, by taking the supremum over $t'\in [-T',\, 0]$ in \eqref{eq-4} and dropping the integral terms on the right-hand side, we obtain
\begin{equation}\label{eq-22}
    I\geq \sup_{-T'\leq t'\leq 0}\frac{1}{2\beta}\int_{B(o,R)}\phi^2 u^{2\beta}(x, t')\,dV \geq \frac{1}{3\beta}\sup_{-T'\leq t'\leq 0}\int_{B(o,R)}\phi^2 u^{2\beta}(x, t')\,dV.
\end{equation}

Using \eqref{eq-2} and \eqref{eq-22}, together with the H\"older inequality, we get
\begin{equation}\label{eq-3}
\begin{aligned}
I^{\frac{2\mu-1}{\mu}}&\geq \bigg(\frac{1}{3\beta}\bigg)^{\frac{2\mu-1}{\mu}}e^{-C_n(1+\sqrt{\kappa}R)}R^{-2}V^\frac{2}{N }\bigg(\sup_{-T'\leq t'\leq 0}\int_{B(o,R)}\phi^2 u^{2\beta}(x, t')\,dV\bigg)^\frac{\mu-1}{\mu}\\
&\cdot\int_{-T'}^{0}\bigg(\int_{B(o,R)}(\phi u^\beta)^{2\mu}\,dV\bigg)^\frac{1}{\mu}\,dt\\
&\geq \bigg(\frac{1}{3\beta}\bigg)^{\frac{2\mu-1}{\mu}}e^{-C_n(1+\sqrt{\kappa}R)}R^{-2}V^\frac{2}{N }\int_{-T'}^{0}\bigg(\int_{B(o,R)}\phi^2 u^{2\beta}(x, t)\,dV\bigg)^\frac{\mu-1}{\mu}\bigg(\int_{B(o,R)}(\phi u^\beta)^{2\mu}\,dV\bigg)^\frac{1}{\mu}\,dt\\
&\geq \bigg(\frac{1}{3\beta}\bigg)^{\frac{2\mu-1}{\mu}}e^{-C_n(1+\sqrt{\kappa}R)}R^{-2}V^\frac{2}{N }\int_{-T'}^{0}\int_{B(o,R)}(\phi u^\beta)^{2\frac{2\mu-1}{\mu}}\,dV\,dt.
\end{aligned}
\end{equation}

Combining inequalities \eqref{eq-1} and \eqref{eq-3}, we conclude that
\begin{align*}
&\frac{6}{\beta}\int_{-T}^{0}\int_{B(o,R)} \Phi^2(t)|\nabla \phi|^2u^{2\beta}\,dV\,dt +\frac{2}{\beta}\int_{-T}^{0}\int_{B(o,R)}\big(R^{-2}\Phi^2(t)\phi^2u^{2\beta} + \Phi(t)\Phi'(t)\phi^2u^{2\beta}\big)\,dV\,dt\geq 2I\\
&\geq A\int_{-T}^{0}\int_{B(o,R)}\Phi^2(t)\phi^2u^{2\beta+1}\,dV\,dt+\frac{1}{3\beta}\bigg(e^{-C_n(1+\sqrt{\kappa}R)}R^{-2}V^\frac{2}{N }\int_{-T'}^{0}\int_{B(o,R)}(\phi u^\beta)^{2\frac{2\mu-1}{\mu}}\,dV\,dt\bigg)^\frac{\mu}{2\mu-1}.
\end{align*}
Multiplying the inequality by $3\beta$, we obtain $\eqref{eq:l1}$. This completes the proof.
\end{proof}

Next, we derive an initial $L^p$-bound, which will serve as the starting point for the Nash--Moser iteration. Set 
\begin{equation}\label{e:def-beta0}
   \beta_0 :=
\max\left\{
\beta_*,
C_0\left(
1+\sqrt\kappa R+\frac{T}{R^2}+\frac{R^2}{T}
\right)
\right\}, 
\end{equation}
where $\beta_*$ is given by Lemma~\ref{lem:integral-ineq} and $C_0>0$ is a constant to be determined later. Furthermore, we define
$$\beta_1 := \frac{2\mu-1}{\mu}\beta_0=\frac{N +2}{N }\beta_0.$$

\begin{lem}\label{lem:initial-bound}
There exists $C_0>0$ such that
\begin{equation}\label{lem-4.1}
\begin{aligned}
    \bigg(e^{-C_n(1+\sqrt{\kappa}R)}R^{-2}V^{\frac{2}{N }}\int_{-\frac{3T}{4}}^{0}\int_{B(o,\frac{3}{4}R)}&u^{2\beta_1}\,dV\,dt\bigg)^\frac{1}{2\beta_1}\\
    &\leq C(n,p,\de,\lambda)\bigg(\Big(\frac{(2\beta_0+1)^{2\beta_0+1}}{R^{4\beta_0+2}}+\frac{1}{T^{2\beta_0+1}}\Big)TV\bigg)^{\frac{1}{2\beta_0}}.
\end{aligned}
\end{equation}
\end{lem}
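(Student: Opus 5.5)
We apply Lemma~\ref{lem:integral-ineq} with $\beta=\beta_0$ and suitable cut-offs, and use the good term $3\beta_0A\iint\Phi^2\phi^2u^{2\beta_0+1}$ to absorb all three terms on the left-hand side of \eqref{eq:l1}. This produces a bound on the $L^{2\beta_1}$ norm in terms of explicit quantities.

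\textbf{Cut-offs.} Take $\phi$ supported in $B(o,R)$ with $\phi\equiv1$ on $B(o,\tfrac34R)$, $0\le\phi\le1$ and $|\nabla\phi|\le C/R$. For technical convenience we instead use $\phi=\psi^{2\beta_0+1}$ with $\psi$ such a cut-off, so that $|\nabla\phi|^2\le C\beta_0^2R^{-2}\phi^{\frac{4\beta_0}{2\beta_0+1}}$. Similarly take $\Phi$ with $\Phi\equiv1$ on $[-\tfrac34T,0]$ and $|\Phi'|\le C/T$, and arrange $\Phi'\le C\,T^{-1}\Phi^{\frac{2\beta_0}{2\beta_0+1}}$ by the same power trick.

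\textbf{Young absorption.} Each left-hand term of \eqref{eq:l1} has the form $\iint w\,\Phi^2\phi^2u^{2\beta_0}$ up to the cut-off powers, with weight $w\lesssim \beta_0^2R^{-2}+R^{-2}+T^{-1}$ (after the power trick). Young's inequality with exponents $\frac{2\beta_0+1}{2\beta_0}$ and $2\beta_0+1$ gives
\[
w\,u^{2\beta_0}\le \beta_0A\,u^{2\beta_0+1}+C(A)\,\beta_0^{-2\beta_0}\,w^{2\beta_0+1}.
\]
The first piece is absorbed into $3\beta_0A\iint\Phi^2\phi^2u^{2\beta_0+1}$. The second, integrated over $P_{R,T}$, contributes at most $C\,TV\big(\beta_0^{2\beta_0+1}R^{-4\beta_0-2}+T^{-2\beta_0-1}\big)$, up to factors $C^{2\beta_0}$ that become constants after taking the $2\beta_0$-th root. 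Since $\beta_0\ge C_0(1+\sqrt\kappa R+T/R^2+R^2/T)$, we may choose $C_0$ large enough that the $R^{-2}$ term is dominated by the $\beta_0^2R^{-2}$ term. This yields the quantity $\frac{(2\beta_0+1)^{2\beta_0+1}}{R^{4\beta_0+2}}+\frac1{T^{2\beta_0+1}}$ up to a factor $c^{2\beta_0}$.

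\textbf{Conclusion.} The remaining Sobolev term in \eqref{eq:l1} then gives
\[
\Big(e^{-C_n(1+\sqrt\kappa R)}R^{-2}V^{2/N}\iint(\phi u^{\beta_0})^{2q}\Big)^{\frac{\mu}{2\mu-1}}\le C^{2\beta_0}\,TV\Big(\tfrac{(2\beta_0+1)^{2\beta_0+1}}{R^{4\beta_0+2}}+\tfrac1{T^{2\beta_0+1}}\Big).
\]
On $B(o,\tfrac34R)\times[-\tfrac34T,0]$ we have $\phi=\Phi=1$, and $(\phi u^{\beta_0})^{2q}=u^{2\beta_1}$. Raising the above inequality to the power $\frac{1}{2\beta_0}$, the left side becomes $\big(\cdots\big)^{\frac{1}{2\beta_1}}$ because $\frac{\mu}{2\mu-1}\cdot\frac1{2\beta_0}=\frac1{2\beta_1}$, which is \eqref{lem-4.1}. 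The constant depends on $A=\ep_1$ and on $\beta_*$ through $C=1/\ep_2$, hence on $n,p,\lambda,\delta$.

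\textbf{Main obstacle.} The delicate step is the Young absorption. The cut-off derivatives $|\nabla\phi|^2$ and $\Phi'$ are not pointwise controlled by $\phi^2$ and $\Phi^2$, so the naive estimate fails. The power-of-cut-off trick is needed to make every remainder term carry enough cut-off weight for the absorption to go through, while keeping the $\beta_0$-dependence exactly of the stated form $(2\beta_0+1)^{2\beta_0+1}$ rather than something worse.
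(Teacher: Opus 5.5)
Your proposal is correct and is essentially the paper's argument: the power-of-cut-off trick plus Young's inequality, absorbing all three left-hand terms of \eqref{eq:l1} into $3\beta_0A\iint\Phi^2\phi^2u^{2\beta_0+1}$; the paper differs only cosmetically, handling the $R^{-2}$ term by splitting into $\{u\ge 4R^{-2}/(A\beta_0)\}$ and its complement and applying H\"older before Young for the cut-off terms. One slip: the power trick gives $\Phi'\le C(2\beta_0+1)T^{-1}\Phi^{\frac{2\beta_0}{2\beta_0+1}}$, and a $\beta_0$-free constant is impossible since $\Phi^{1/(2\beta_0+1)}$ must rise from $0$ to $1$ over a time interval of length comparable to $T$. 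This is harmless, because the extra $\beta_0$ in the time weight is offset by the factor $\beta_0$ in the absorbing coefficient, leaving a remainder of order $C^{2\beta_0+1}\beta_0A^{-2\beta_0}T^{-2\beta_0}V$ whose $2\beta_0$-th root costs only an extra $\beta_0^{1/(2\beta_0)}\le C$, exactly as in the paper.
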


\begin{proof}
We split the domain of integration by
\[
\Omega_1=\left\{ (x,t)\in P_{R,T}\,\bigg|\, u\geq \frac{4R^{-2}}{A\beta_0}\right\}, \quad \Omega_2=P_{R,T}\setminus\Omega_1.
\]
Then, estimating the integrals over $\Omega_1$ and $\Omega_2$ separately, we obtain
\begin{equation}\label{eq:l2-0}
\begin{aligned}
&\int_{-T}^{0}\int_{B(o,R)}R^{-2}\Phi^2(t)\phi^2u^{2\beta_0}\,dV\,dt\\
&\leq \int_{\Omega_1}R^{-2}\Phi^2(t)\phi^2u^{2\beta_0}\,dV\,dt+\int_{\Omega_2}R^{-2}\Phi^2(t)\phi^2u^{2\beta_0}\,dV\,dt\\
&\leq \frac{1}{4}\beta_0 A \int_{\Omega_1}\Phi^2(t)\phi^2u^{2\beta_0+1}\,dV\,dt
+\int_{-T}^{0}\int_{B(o,R)}\frac{4^{2\beta_0}(R^{-2})^{2\beta_0+1}}{A^{2\beta_0}\beta_0^{2\beta_0}}\Phi^2(t)\phi^2\,dV\,dt.
\end{aligned}
\end{equation}

Now, choose $\phi_1$ be a standard flat cut-off function on $C_0^{\infty}(B(o, R))$ such that $0 \leq \phi_1 \leq 1$, $\phi_1=1$ on $B\left(o, \frac{3}{4} R\right)$, and $\left|\nabla \phi_1\right| \leq \frac{8}{R}$. Since $\phi_1$ vanishes to infinite order at the boundary of its zero set, the function $\phi_1^{2\beta_0+1}$
 is smooth. 
 
 Let $\phi=\phi_1^{2\beta_0+1}$. A direct computation shows
$$
R^2|\nabla \phi|^2 \leq 64(2\beta_0+1)^2 \phi^{\frac{4 \beta_0}{2\beta_0+1}}.
$$

 We choose $\eta$ to be a standard flat cut-off function on $C^{\infty}([-T,\,0 ])$ such that $0 \leq \eta \leq 1$, $\eta=0$ on $[-T,\, -\frac{15}{16}T]$, $\eta=1$ on $[-\frac{3T}{4},\, 0]$, and $0\leq \eta' \leq \frac{8}{T}$. Since $\eta$ vanishes to infinite order at the boundary of its zero set, the function $\eta^{\frac{2\beta_0+1}{2}}$
 is smooth. 
 
 Let $\Phi(t)=\eta^{\frac{2\beta_0+1}{2}}$. Another direct computation gives
$$
T \Phi'(t) \leq 16(2\beta_0+1)  \Phi^{\frac{2\beta_0-1}{2\beta_0+1}}.
$$
By Young's inequality, there exists a universal constant $C>0$ (which may change from line to line) such that
$$
\begin{aligned}
R^2 \int_{B(o,R)} |\nabla \phi|^2 u^{2\beta_0}\,dV
& \leq C (2\beta_0+1)^2 \int_{B(o,R)} \phi^{\frac{4\beta_0}{2\beta_0+1}}u^{2\beta_0}\,dV\\
& \leq C (2\beta_0+1)^2\left(\int_{B(o,R)} \phi^2 u^{2\beta_0+1}\,dV\right)^{\frac{2\beta_0}{2\beta_0+1}} V^{\frac{1}{2\beta_0+1}}\\
& \leq \frac{1}{16} \beta_0 R^2 A \int_{B(o,R)}\phi^2 u^{2\beta_0+1}\,dV + \frac{16^{2\beta_0}C^{2\beta_0+1}(2\beta_0+1)^{2\beta_0+1}}{R^{4\beta_0}A^{2\beta_0}} V.
\end{aligned}
$$
It follows that
\begin{equation}\label{eq:l2-1}
\begin{aligned}
18\int_{-T}^{0}\int_{B(o,R)} \Phi^2|\nabla \phi|^2 u^{2\beta_0}\,dV\,dt
&\leq \frac{9}{8} \beta_0 A \int_{-T}^{0}\int_{B(o,R)}\Phi^2\phi^2 u^{2\beta_0+1}\,dV\,dt\\
&\quad +18\int_{-T}^{0}\Phi^2\frac{16^{2\beta_0}C^{2\beta_0+1}(2\beta_0+1)^{2\beta_0+1}}{A^{2\beta_0}R^{2(2\beta_0+1)}} V\,dt\\
&\leq \frac{9}{8}\beta_0 A\int_{-T}^{0}\int_{B(o,R)}\Phi^2\phi^2 u^{2\beta_0+1}\,dV\,dt\\
&+ \frac{C^{2\beta_0+1}(2\beta_0+1)^{2\beta_0+1}T}{A^{2\beta_0}R^{4\beta_0+2}} V.
\end{aligned}
\end{equation}

Meanwhile, similar estimates for the time derivative yield
$$
\begin{aligned}
T\int_{-T}^{0} \Phi\Phi' u^{2\beta_0}\,dt
& \leq C (2\beta_0+1) \int_{-T}^{0} \Phi^{\frac{4\beta_0}{2\beta_0+1}}u^{2\beta_0}\,dt\\
& \leq C (2\beta_0+1)\left(\int_{-T}^{0} \Phi^2 u^{2\beta_0+1}\,dt\right)^{\frac{2\beta_0}{2\beta_0+1}} T^{\frac{1}{2\beta_0+1}}      \\
& \leq \frac{1}{16} \beta_0 T A \int_{-T}^{0}\Phi^2 u^{2\beta_0+1}\,dt + \frac{16^{2\beta_0}C^{2\beta_0+1}}{A^{2\beta_0}T^{2\beta_0}} T.
\end{aligned}
$$
Hence
\begin{equation}\label{eq:l2-2}
\begin{aligned}
6\int_{-T}^{0}\int_{B(o,R)} \phi^2 \Phi\Phi'u^{2\beta_0}\,dV\,dt
&\leq \frac{3}{8}\beta_0 A\int_{-T}^{0}\int_{B(o,R)}\Phi^2\phi^2 u^{2\beta_0+1}\,dV\,dt\\
&+6\int_{B(o,R)}\phi^2\frac{16^{2\beta_0}C^{2\beta_0+1}}{A^{2\beta_0}T^{2\beta_0}}\,dV\\
&\leq \frac{3}{8}\beta_0 A\int_{-T}^{0}\int_{B(o,R)}\Phi^2\phi^2 u^{2\beta_0+1}\,dV\,dt +\frac{C^{2\beta_0+1}}{A^{2\beta_0}T^{2\beta_0}}V.
\end{aligned}
\end{equation}

Inserting \eqref{eq:l2-0}, \eqref{eq:l2-1}, and \eqref{eq:l2-2} into \eqref{eq:l1}, we get
\begin{align*}
&\bigg(e^{-C_n(1+\sqrt{\kappa}R)}R^{-2}V^\frac{2}{N }\int_{-\frac{3T}{4}}^{0}\int_{B(o,\frac{3}{4}R)}(u^{2\beta_0})^{\frac{2\mu-1}{\mu}}\,dV\,dt\bigg)^\frac{\mu}{2\mu-1}\\
&\leq \frac{4^{2\beta_0}(R^{-2})^{2\beta_0+1}T}{A^{2\beta_0}\beta_0^{2\beta_0}}V+\frac{C^{2\beta_0+1}(2\beta_0+1)^{2\beta_0+1}T}{A^{2\beta_0}R^{4\beta_0+2}} V+\frac{4^{2\beta_0}C^{2\beta_0+1}}{A^{2\beta_0}T^{2\beta_0}} V\\
&\le \frac{C^{2\beta_0+1}(2\beta_0+1)^{2\beta_0+1}}{A^{2\beta_0}}TV\frac{1}{R^{4\beta_0+2}}
+\frac{C^{2\beta_0+1}}{A^{2\beta_0}}TV\frac{1}{T^{2\beta_0+1}}.
\end{align*}
Therefore, recalling that $\beta_1 = \frac{2\mu-1}{\mu}\beta_0$, we conclude the desired inequality \eqref{lem-4.1}.
\end{proof}

\section{Proof of Main Theorems}\label{s:proof}

We are now in a position to establish the Li-Yau gradient estimates in our main theorems via the Nash--Moser iteration. 

\begin{proof}[Proof of Theorem~\ref{thm:main1}]
By assumption, $n\ge 3$, $p$ satisfies (P1) or (P2), and $a(x,t)$ satisfies (A1), which fulfills the assumption (T1) in Theorem~\ref{l:pointwise-ineq1}. So the function $u$ defined by \eqref{e:def-u} satisfies the pointwise differential inequality \eqref{ineq-pt}. Then by Lemma \ref{lem:integral-ineq}, there exists $\beta_*\ge 1$, such that for all $\beta\ge \beta_*$, $v>0$ and $-T<s+v<0$, the following inequality holds:
\begin{equation}\label{ineq-moser}
\begin{aligned}
&18\int_{s}^{0}\int_{B(o,R)}u^{2\beta}\big(\Phi^2(t)|\nabla \phi|^2+R^{-2}\Phi^2(t)\phi^2+\Phi(t)\Phi'(t)\phi^2\big)\,dV\,dt\\ 
&\geq \bigg(e^{-C_n(1+\sqrt{\kappa}R)}R^{-2}V^\frac{2}{N }\int_{s+v}^{0}\int_{B(o,R)}(\phi u^\beta)^{2\frac{2\mu-1}{\mu}}\,dV\,dt \bigg)^\frac{\mu}{2\mu-1}.
\end{aligned}
\end{equation}

Let $\beta_0$ be defined by \eqref{e:def-beta0}. We set up the iterative sequences as follows:
\begin{align*}
&\beta_{k}=\beta_{k-1}\frac{2\mu-1}{\mu}, \quad r_k=\frac{R}{2}+\frac{R}{4^k},\\
&s_k=-\frac{T}{2}-\frac{T}{2^{k+1}}, \quad \text{and} \quad v_k=\frac{T}{2^{k+2}}, \quad \text{for } k=1, 2, \cdots.
\end{align*}
Let $\phi_k\in C_0^\infty(B(o,r_k))$ and $\Phi_k\in C^\infty([s_k,0])$ be cut-off functions satisfying
\begin{equation*}
\phi_k\equiv 1 \quad \text{on } B(o,r_{k+1}), \quad \text{with} \quad 0\leq \phi_k\leq 1 \quad\text{and} \quad |\nabla \phi_k|\leq C\frac{4^{k+1}}{R},
\end{equation*}
and
\begin{equation*}
\Phi_k(s_k)=0\quad \text{and}\quad\Phi_k\equiv 1 \quad \text{on } [s_k+v_k,0], \quad \text{with} \quad 0\leq \Phi_k\leq 1 \quad \text{and}\quad 0\leq \Phi_k'\leq C\frac{8^{k+1}}{T}.
\end{equation*}
For each $k$, substituting $\beta=\beta_k$, $\phi=\phi_k$, and $\Phi=\Phi_k$ into \eqref{ineq-moser}, we arrive at
\begin{align*}
\|u\|_{2\beta_{k+1},r_{k+1},[s_{k+1},0]}
&\leq \big(Ce^{C_R}R^2V^{-\frac{2}{N }}\big)^\frac{1}{2\beta_{k+1}}(R^{-2}+T^{-1})^{\frac{1}{2\beta_k}}\left(16^{k+1}+8^{k+1}\right)^\frac{1}{2\beta_k}\|u\|_{2\beta_{k},r_{k},[s_{k},0]}\\
&\le  C^\frac{1}{2\beta_k}\big(e^{C_R}R^2V^{-\frac{2}{N }}\big)^\frac{1}{2\beta_{k+1}}(R^{-2}+T^{-1})^{\frac{1}{2\beta_k}}16^\frac{k}{2\beta_k}\|u\|_{2\beta_{k},r_{k},[s_{k},0]},
\end{align*}
where we have denoted
\begin{equation*}
\|u\|_{2\beta,r,[s,t]}:=\bigg(\int_s^t\int_{B(o,r)}u^{2\beta}\,dV\,dt\bigg)^\frac{1}{2\beta} \quad \text{and} \quad C_R=C_n(1+\sqrt{\kappa}R).
\end{equation*}
By iterating the above inequality, we obtain
\begin{equation}\label{eq:iteration}
\|u\|_{2\beta_{k+1},r_{k+1},[s_{k+1},0]}\leq C^{\sum_{i=1}^{k+1}\frac{1}{2\beta_i}}\big(e^{C_R}R^2V^{-\frac{2}{N }}\big)^{\sum_{i=2}^{k+1}\frac{1}{2\beta_i}}(R^{-2} +T^{-1})^{\sum_{i=1}^{k}\frac{1}{2\beta_i}}16^{\sum_{i=1}^{k}\frac{i}{2\beta_i}}\|u\|_{2\beta_{1},r_{1},[s_{1},0]}.
\end{equation}
Notice that the series converge as
\[
\sum_{i=1}^\infty\frac{1}{2\beta_i}=\frac{N +2}{4\beta_1}= \frac{N }{4\beta_0},\quad \sum_{i=2}^\infty\frac{1}{2\beta_i}=\frac{N +2}{4\beta_2}= \frac{N ^2}{4\beta_0(N +2)}, \quad \sum_{i=1}^{\infty}\frac{i}{2\beta_i}=\frac{N (N +2)}{8\beta_0}. 
\]
Thus, letting $k\rightarrow \infty$ in \eqref{eq:iteration} yields
\begin{equation}\label{eq-5}
\|u\|_{\infty,\frac{1}{2}R,[-\frac{T}{2},0]}\leq  C^{\frac{N}{4\beta_0}}\big(e^{C_R}R^2V^{-\frac{2}{N }}\big)^{\frac{N ^2}{4\beta_0(N +2)}}(R^{-2}+T^{-1})^\frac{N }{4\beta_0}
16^{\frac{N (N +2)}{8\beta_0}}\|u\|_{2\beta_{1},r_{1},[s_{1},0]}.
\end{equation}
Substituting the initial $L^p$-bound \eqref{lem-4.1} from Lemma~\ref{lem:initial-bound} into \eqref{eq-5}, we get
\begin{align*}
\|u\|_{\infty,\frac{1}{2}R,[-\frac{T}{2},0]} \leq& \, C(e^{C_R})^{\frac{N }{4\beta_0}}(R^2V^{-\frac{2}{N }})^{\frac{N ^2}{4\beta_0(N +2)}}(R^{-2}+T^{-1})^\frac{N }{4\beta_0}V^{\frac{1}{2\beta_0}-\frac{2}{N }\frac{\mu}{2\beta_0(2\mu-1)}}R^{\frac{2\mu}{2\beta_0(2\mu-1)}}\\
&\cdot\left(\left(\frac{(2\beta_0+1)^{2\beta_0+1}}{R^{4\beta_0+2}}+\frac{1}{T^{2\beta_0+1}}\right)T\right)^{\frac{1}{2\beta_0}}\\
\leq& \, CR^{\frac{N ^2}{2\beta_0(N +2)}+\frac{2N }{2\beta_0(N +2)}}(R^{-2}+T^{-1})^\frac{N }{4\beta_0}
V^{-\frac{N }{2\beta_0(N +2)}+\frac{1}{2\beta_0}-\frac{2}{2\beta_0(N +2)}}\\
&\cdot\left(\left(\frac{(2\beta_0+1)^{2\beta_0+1}}{R^{4\beta_0+2}}+\frac{1}{T^{2\beta_0+1}}\right)T\right)^{\frac{1}{2\beta_0}}.
\end{align*}
Through a straightforward computation and rearrangement, this simplifies to
\begin{align*}
\|u\|_{\infty,\frac{1}{2}R,[-\frac{T}{2},0]}\leq & \, CR^\frac{N}{2\beta_0}(R^{-2}+T^{-1})^\frac{N }{4\beta_0}T^{\frac{1}{2\beta_0}}\left(\frac{(2\beta_0+1)^{2\beta_0+1}}{R^{4\beta_0+2}}+\frac{1}{T^{2\beta_0+1}}\right)^\frac{1}{2\beta_0}\\
= & \, CR^{-2}\bigg[\left(1+\frac{R^2}{T}\right)^{\frac{N }{2}}\left(\frac{T(2\beta_0+1)^{2\beta_0+1}}{R^2}+\left(\frac{R^2}{T}\right)^{2\beta_0}\right)\bigg]^\frac{1}{2\beta_0}\\
\leq & \, CR^{-2}\bigg[\left(1+\frac{R^2}{T}\right)^{\frac{N }{2}}\left(\beta_0(2\beta_0+1)^{2\beta_0+1}+\left(\frac{R^2}{T}\right)^{2\beta_0}\right)\bigg]^\frac{1}{2\beta_0}\\
\leq & \, C\left(\frac{2\beta_0+1}{R^2}+\frac{1}{T}\right) \leq C\left(\frac{T}{R^4}+\frac{1}{T}+\frac{1+\sqrt{\kappa}R}{R^2}\right),
\end{align*}
where in the last step we have used \eqref{e:def-beta0}.

Recalling the definition \eqref{e:def-u} of $u$, we immediately deduce that on $P_{R/2,T/2}$, 
\begin{equation}\label{e:final}
\begin{aligned}
&\lambda\frac{|\nabla f|^2}{f^2} -  \frac{\partial_t f}{f}+\delta a(x,t)f^{p-1}\leq \bigg(\lambda\frac{|\nabla f|^2}{f^2} -  \frac{\partial_t f}{f}+\delta a(x,t)f^{p-1}\bigg)_+\\
\leq & \, C(n,p,\lambda,\delta)\left[\frac{T}{R^4}+\frac{1}{T}+\frac{1+\sqrt{\kappa}R}{R^2}+\frac{2\lambda(n-1)}{\ep_2}\kappa+\left(\frac{2(\lambda-\delta p)^2}{\ep_2\ep_3}+\sqrt{\frac{\delta^2+(1-\de)^2}{\ep_1\ep_3}}\right)K\right].
\end{aligned}
\end{equation}

Now if $T\leq 2R^2$, \eqref{e:final} directly gives
\begin{align*}
&\lambda\frac{|\nabla f|^2}{f^2} -  \frac{\partial_t f}{f}+\delta a(x,t)f^{p-1}\\
\leq & \, C(n,p,\lambda,\delta)\left[\frac{1}{T}+\frac{1+\sqrt{\kappa} R}{R^2}+\frac{2\lambda(n-1)}{\ep_2}\kappa
+\left(\frac{2(\lambda-\delta p)^2}{\ep_2\ep_3}+\sqrt{\frac{\delta^2+(1-\de)^2}{\ep_1\ep_3}}\right)K\right]\\
\leq & \, C(n,p,\lambda,\delta)\left(\frac{1}{T}+\frac{1}{R^2}+\kappa+K\right)
\end{align*}
on $P_{R/2,T/2}$.
Otherwise if $T\geq 2R^2$, then for an arbitrary point $(x_1, t_1)\in P_{R/2,T/2}=B(o,\frac{R}{2})\times[-\frac{T}{2},0] $, we can apply \eqref{e:final} on the cylinder $P_{R,R^2}(t_1)=B(o,R)\times [t_1-R^2,\,t_1]$ to obtain
\begin{align*}
&\lambda\frac{|\nabla f|^2}{f^2} - \frac{\partial_t f}{f}+\delta a(x,t)f^{p-1}\\
\leq & \, C(n,p,\lambda,\delta)\left[\frac{1+\sqrt{\kappa}R}{R^2} +\frac{2\lambda(n-1)}{\ep_2}\kappa+\left(\frac{2(\lambda-\delta p)^2}{\ep_2\ep_3}+\sqrt{\frac{\delta^2+(1-\de)^2}{\ep_1\ep_3}}\right)K\right]\\
\leq & \, C(n,p,\lambda,\delta)\left(\frac{1+\kappa R^2}{R^2}+K\right) \leq C(n,p,\lambda,\delta)\left(\frac{1 }{R^2}+\frac{1}{T}+\kappa+K\right)
\end{align*}
in $P_{\frac{R}{2},\frac{R^2}{2}}(t_1)=B(o,\frac{R}{2})\times[t_1-\frac{R^2}{2},\,t_1]$.

Finally, recall that by Lemma~\ref{lem:H_estimate}, we have $\de>0$ when $a(x,t)$ is positive and $\de<0$ when $a(x,t)$ is negative. In either case, we may set $\de_0 = |\de|>0$ such that 
\[
    \lambda\frac{|\nabla f|^2}{f^2} -  \frac{\partial_t f}{f}+\delta_0 |a(x,t)|f^{p-1}\leq C(n,p,\lambda,\delta_0)\left(\frac{1}{T}+\frac{1}{R^2}+\kappa+K\right).
\]
This completes the proof.
\end{proof}

\begin{proof}[Proof of Theorem~\ref{thm:main2} and Theorem~\ref{thm:main3}]
Under the assumption of Theorem~\ref{thm:main2} or Theorem~\ref{thm:main3}, $n$, $p$ and $a(x,t)$ fulfill the assumption (T2) or (T3) in Theorem~\ref{l:pointwise-ineq1}, respectively.
Therefore, by setting $\delta=\lambda/p$, the function $u$ given by \eqref{e:def-u} satisfies the key differential inequality \eqref{ineq-pt}, where $\ld=\frac{p}{2}$ in case (T2) and $\ld$ is sufficiently close to $1$ in case (T3).

Then the same argument as in the proof of Theorem~\ref{thm:main1} yields the desired inequality \eqref{ineq:zhuyao2} in Theorem~\ref{thm:main2} and \eqref{ineq:zhuyao3} in Theorem~\ref{thm:main3}.
\end{proof}

Finally, we use Theorem~\ref{thm:main1},  Theorem~\ref{thm:main2} and Theorem~\ref{thm:main3} to establish the Liouville theorems.

\begin{proof}[Proof of Corollary~\ref{thm:liouville1}]
In case (1), suppose for contradiction that there exists a positive eternal solution $f$ to \eqref{e:main} on $M\times(-\infty, +\infty)$. By Theorem~\ref{thm:main1}, there exist constants $0<\lambda<1$ and $\delta_0>0$ such that on $P_{R/2,T/2}$, 
\begin{equation*}
\lambda\frac{|\nabla f|^2}{f^2}-\frac{f_t}{f}+\delta_0 |c|f^{p-1}\leq C(n,p,\lambda,\delta_0)\left(\frac{1}{T}+\frac{1}{R^2}\right).
\end{equation*}
Letting $T\rightarrow+\infty$ and $R\rightarrow +\infty$, we find that there exists a positive constant $B=\delta_0 |c|>0$ such that
\[
B \leq f^{-p}(x,t)f_t(x,t),
\]
for all $(x,t)\in M\times \mathbb{R}$. Integrating with respect to $t$ over an interval $[t_1,t_2]$ gives
\begin{equation*}
    \left.\frac{1}{1-p}f^{1-p}(x, t)\right|^{t_2}_{t_1}=\frac{1}{p-1}\big(f^{1-p}(x, t_1)-f^{1-p}(x,t_2)\big)\geq B(t_2-t_1),
\end{equation*}
which implies that
\[
\frac{1}{p-1}f^{1-p}(x,t_1)-B(t_2-t_1)\geq \frac{1}{p-1}f^{1-p}(x,t_2)>0.
\]
However, letting $t_2$ be sufficiently large leads to a contradiction, since the left-hand side tends to $-\infty$ while the right-hand side remains strictly positive. 

In case (2), suppose for contradiction that there exists a positive ancient solution $f$ to \eqref{e:main} on $M\times(-\infty, T)$. The same argument as above gives
\[
\frac{1}{1-p}f^{1-p}(x,t_2)-B(t_2-t_1)\ge \frac{1}{1-p}f^{1-p}(x,t_1)>0.
\]
By letting $t_1\rightarrow-\infty$, we get a contradiction.
This completes the proof.
\end{proof}

\begin{proof}[Proof of Corollary~\ref{thm:liouville2}]
    Suppose for contradiction that $f$ is a nontrivial positive solution to \eqref{e:main}. By assumption, $p$ satisfies (P3) and $a(x,t)$ satisfies (A2) in Theorem~\ref{thm:main2}, so that we have the Li-Yau gradient estimate
    \begin{equation*}
    \lambda\frac{|\nabla f|^2}{f^2}-\frac{f_t}{f}+\frac{\lambda}{p} a(x,t)f^{p-1}\leq C(n,p,\lambda)\left(\frac{1}{T}+\frac{1}{R^2}\right)
    \end{equation*}
    on $P_{R/2, T/2}$. 
    Letting $T\rightarrow+\infty$ and $R\rightarrow +\infty$, we deduce that for any $(x,t)\in M\times (-\infty,T_0)$, 
    \[
    \frac{\lambda}{p}a(x,t)\leq f^{-p}(x,t)f_t(x,t).
    \]
    In particular,
    \[
    \frac{\lambda}{p}a(x_0,t)\leq f^{-p}(x_0,t)f_t(x_0,t).
    \]
    Integrating over an interval $[t_1,t_2]$ gives
    \[
    \left.\frac{1}{1-p}f^{1-p}(x_0,t)\right|^{t_2}_{t_1}=\frac{1}{p-1}\big(f^{1-p}(x_0,t_1)-f^{1-p}(x_0,t_2)\big)\geq \frac{\lambda}{p}\int_{t_1}^{t_2}a(x_0,t)dt,
    \]
    which implies that
    \[
    \frac{1}{p-1}f^{1-p}(x_0,t_1)-\frac{\lambda}{p}\int_{t_1}^{t_2}a(x_0,t)dt\geq \frac{1}{p-1}f^{1-p}(x_0,t_2)>0.
    \]
    Letting $t_1=T_1$ and $t_2\rightarrow T_0$ leads to a contradiction.
\end{proof}
\begin{proof}[Proof of Corollary~\ref{thm:liouville3}]
   Since $\sup_M a>0$, there exists a point $x_0\in M$ such that
$a(x_0)>0$. If $n\geq 4$, the conclusion follows from
Theorem~\ref{thm:main2}, since $a_t=0$ and
\[
(\Delta+\partial_t)a=\Delta a\geq 0.
\]
If $n\leq 3$, Theorem~\ref{thm:main3} applies because
\[
L_\lambda a=\Delta a\geq 0.
\]
Thus, in either case, letting $T\to+\infty$ and $R\rightarrow+\infty$ in the corresponding
gradient estimate yields
\[
\frac{\lambda}{p}a(x_0)
\leq f^{-p}(x_0,t)\,f_t(x_0,t).
\]

Integrating this inequality over $[t_1,t_2]$, we obtain
 \[
    \frac{1}{p-1}f^{1-p}(x_0,t_1)-\frac{\lambda}{p}a(x_0)(t_2-t_1)\geq \frac{1}{p-1}f^{1-p}(x_0,t_2)>0.
    \]
    Letting $t_2\rightarrow +\infty$ leads to a contradiction.
\end{proof}

\section*{Acknowledgments}

The authors would like to thank Professor Youde Wang for enlightening discussions and continuous support. This work is partially 
supported by NSFC No. 12371061 and Natural Science Foundation of Fujian Province of China No. 2026J011003.


\begin{thebibliography}{99}

\bibitem{BV1998}
M.-F.~Bidaut-V\'eron, 
Initial blow-up for the solutions of a semilinear parabolic equation with source term. 
\emph{\'Equations aux d\'eriv\'ees partielles et applications}, Gauthier-Villars, \'Ed. Sci. M\'ed. Elsevier, Paris, 1998, pp. 189--198.

\bibitem{CCK15}
X.~Cao, M.~Cerenzia, and D.~Kazaras,
Harnack estimate for the endangered species equation.
\emph{Proc. Amer. Math. Soc.} \textbf{143} (2015), no. 10, 4537--4545.

\bibitem{CCM23}
D.~Castorina, G.~Catino, and C.~Mantegazza, 
Semilinear Li and Yau inequalities. 
\emph{Ann. Mat. Pura Appl.} \textbf{202} (2023), 827--850.

\bibitem{CM2021}
D.~Castorina and C.~Mantegazza, 
Ancient solutions of superlinear heat equations on Riemannian manifolds. 
\emph{Commun. Contemp. Math.} \textbf{23} (2021), no. 8, Paper No. 2050085, 16 pp.

\bibitem{CMS19}
D.~Castorina, C.~Mantegazza, and B.~Sciunzi, 
A Liouville theorem for superlinear heat equations on Riemannian manifolds. 
\emph{Milan J. Math.} \textbf{87} (2019), 303--313.

\bibitem{CDM26}
H.-J.~Chen, S.-Z.~Du, and Y.-X.~Ma,
Li-Yau inequality and Liouville property to a semilinear heat equation on Riemannian manifolds.
\emph{Manuscripta Math.} \textbf{177} (2026), 25.

\bibitem{F66}
H.~Fujita,
On the blowing up of solutions of the Cauchy problem for $u_t = \Delta u + u^{1+\alpha}$.
\emph{J. Fac. Sci. Univ. Tokyo Sect. I} \textbf{13} (1966), no. 2, 109--124.

\bibitem{GS1981}
B.~Gidas and J.~Spruck, 
Global and local behavior of positive solutions of nonlinear elliptic equations. 
\emph{Comm. Pure Appl. Math.} \textbf{34} (1981), no. 4, 525--598.

\bibitem{GK1985}
Y.~Giga and R.~V.~Kohn, 
Asymptotically self-similar blow-up of semilinear heat equations. 
\emph{Comm. Pure Appl. Math.} \textbf{38} (1985), no. 3, 297--319.

\bibitem{GK1987}
Y.~Giga and R.~V.~Kohn,
Characterizing blowup using similarity variables. 
\emph{Indiana Univ. Math. J.} \textbf{36} (1987), 1--40. 

\bibitem{GK1989}
Y.~Giga and R.~V.~Kohn, 
Nondegeneracy of blowup for semilinear heat equations. 
\emph{Comm. Pure Appl. Math.} \textbf{42} (1989), no. 6, 845--884.

\bibitem{Hamilton1993}
R.~S.~Hamilton,  
A matrix Harnack estimate for the heat equation. 
\emph{Comm. Anal. Geom.} \textbf{1} (1993), no. 1, 113--126.

\bibitem{H73}
K.~Hayakawa,
On nonexistence of global solutions of some semilinear parabolic differential equations.
\emph{Proc. Japan Acad.} \textbf{49} (1973), no. 7, 503--505.

\bibitem{HWW24}
J.~He, Y.~Wang, and G.~Wei,
Gradient estimate for solutions of the equation $\Delta_p v+av^q=0$ on a complete Riemannian manifold.
\emph{Math. Z.} \textbf{306} (2024), no. 3, Paper No. 50.

\bibitem{HW22}
P.~Huang and Y.~Wang,
Gradient estimates and Liouville theorems for Lichnerowicz equations.
\emph{Pacific J. Math.} \textbf{317} (2022), 363--386.

\bibitem{KST77}
K.~Kobayashi, T.~Sirao, and H.~Tanaka,
On the growing up problem for semilinear heat equations.
\emph{J. Math. Soc. Japan} \textbf{29} (1977), no. 3, 407--424.

\bibitem{Levine1990}
H.~A.~Levine,
The role of critical exponents in blowup theorems.
\emph{SIAM Rev.} \textbf{32} (1990), 262--288.

\bibitem{L1991}
J.~Li, 
Gradient estimates and Harnack inequalities for nonlinear parabolic and nonlinear elliptic equations on Riemannian manifolds. 
\emph{J. Funct. Anal.} \textbf{100} (1991), no. 2, 233--256.

\bibitem{LY86}
P.~Li and S.-T.~Yau,
On the parabolic kernel of the Schr\"odinger operator.
\emph{Acta Math.} \textbf{156} (1986), 153--201.

\bibitem{LZ25}
Z.~Lu and L.~Zhu,
Some new Li-Yau inequalities for semilinear parabolic equations and its applications.
\emph{J. Geom. Anal.} \textbf{35} (2025), no. 7, Paper No. 206, 31 pp.

\bibitem{MZ2026}
F.~Merle and H.~Zaag,
On degenerate blow-up profiles for the subcritical semilinear heat equation.
\emph{J. Eur. Math. Soc.} \textbf{28} (2026), no. 3, 1081--1146.

\bibitem{PWW21}
B.~Peng, Y.~Wang, and G.~Wei,
Yau type gradient estimates for $\Delta u + au (\log u)^p + bu = 0$ on Riemannian manifolds.
\emph{J. Math. Anal. Appl.} \textbf{498} (2021), no. 1, Paper No. 124963, 24 pp.

\bibitem{PQS07}
P.~Pol\'a\v{c}ik, P.~Quittner, and P.~Souplet, 
Singularity and decay estimates in superlinear problems via Liouville-type theorems. II. Parabolic equations. 
\emph{Indiana Univ. Math. J.} \textbf{56} (2007), no. 2, 879--908.

\bibitem{Q16}
P.~Quittner,
Liouville theorems for scaling invariant superlinear parabolic problems with gradient structure.
\emph{Math. Ann.} \textbf{364} (2016), 269--292.

\bibitem{Q21}
P.~Quittner,
Optimal Liouville theorems for superlinear parabolic problems.
\emph{Duke Math. J.} \textbf{170} (2021), 1113--1136.

\bibitem{QS07}
P.~Quittner and P.~Souplet,
\emph{Superlinear Parabolic Problems: Blow-up, Global Existence and Steady States}.
Birkh\"auser Verlag, Basel, 2007.

\bibitem{SC92}
L.~Saloff-Coste,
Uniformly elliptic operators on Riemannian manifolds.
\emph{J. Differential Geom.} \textbf{36} (1992), 417--450.

\bibitem{SW25}
C.~Song and J.~Wu,
Universal gradient estimates of $\Delta u+a(x)u^p(\ln(u+c))^q=0$ on complete Riemannian manifolds.
\emph{J. Differential Equations} \textbf{434} (2025), Paper No. 113257, 19 pp.

\bibitem{SZ06}
P.~Souplet and Q.~S.~Zhang,
Sharp gradient estimate and Yau's Liouville theorem for the heat equation on noncompact manifolds.
\emph{Bull. Lond. Math. Soc.} \textbf{38} (2006), no. 6, 1045--1053.

\bibitem{WjW24}
J.~Wang and Y.~Wang,
Gradient estimates for $\Delta u+a(x)u\log u+b(x)u=0$ and its parabolic counterpart under integral Ricci curvature bounds.
\emph{Comm. Anal. Geom.} \textbf{32} (2024), no. 4, 923--975.

\bibitem{WjW24*}
J.~Wang and Y.~Wang,
Boundedness and gradient estimates for solutions to $\Delta u+a(x)u\log u+b(x)u=0$ on Riemannian manifolds.
\emph{J. Differential Equations} \textbf{402} (2024), 495--517.

\bibitem{WWW2025}
K.~Wang, J.~Wei, and K.~Wu, 
F-stability, entropy and energy gap for supercritical Fujita equation. 
\emph{J. Reine Angew. Math.} \textbf{822} (2025), 49--106.

\bibitem{WW23}
Y.~Wang and G.~Wei,
On the nonexistence of positive solution to $\Delta u+au^{p+1}=0$ on Riemannian manifolds.
\emph{J. Differential Equations} \textbf{362} (2023), 74--87.

\bibitem{Y08}
Y.~Yang,
Gradient estimates for a nonlinear parabolic equation on Riemannian manifolds.
\emph{Proc. Amer. Math. Soc.} \textbf{136} (2008), no. 11, 4095--4102.

\bibitem{Zhang1998}
Q.~S.~Zhang, 
A new critical phenomenon for semilinear parabolic problems.
\emph{J. Math. Anal. Appl.} \textbf{219} (1998), no. 1, 125--139.

\bibitem{Zhang1999}
Q.~S.~Zhang, 
Blow-up results for nonlinear parabolic equations on manifolds.
\emph{Duke Math. J.} \textbf{97} (1999), no. 3, 515--539.

\bibitem{Z25}
W.~Zheng,
Gradient estimates for a weighted parabolic equation via Moser iteration.
\emph{J. Math. Anal. Appl.} \textbf{560} (2026), no. 1, Paper No. 130623.

\end{thebibliography}
\end{document}